\newtheorem{theorem}{Theorem}[section]
\newtheorem{corollary}[theorem]{Corollary}
\newtheorem{lemma}[theorem]{Lemma}
\newtheorem{proposition}[theorem]{Proposition}
\newenvironment{customthm}[1]
  {\innercustomthm}
  {\endinnercustomthm}
\theoremstyle{definition}
\newtheorem{definition}[theorem]{Definition}
\theoremstyle{remark}
\newtheorem{remark}[theorem]{Remark}
\theoremstyle{question}
\newtheorem{question}[theorem]{Question}
\newtheoremstyle{cited}{.5\baselineskip\@plus.2\baselineskip\@minus.2\baselineskip}{.5\baselineskip\@plus.2\baselineskip\@minus.2\baselineskip}{\itshape}{}{\bfseries}{\bfseries .}{5pt plus 1pt minus 1pt}{\thmname{#1}\thmnumber{ #2}\thmnote{ \normalfont#3}}
\theoremstyle{cited}
\newtheorem{citedthm}[theorem]{Theorem}
\newtheoremstyle{citeddef}{.5\baselineskip\@plus.2\baselineskip\@minus.2\baselineskip}{.5\baselineskip\@plus.2\baselineskip\@minus.2\baselineskip}{}{}{\bfseries}{\bfseries .}{5pt plus 1pt minus 1pt}{\thmname{#1}\thmnumber{ #2}\thmnote{ \normalfont#3}}
\theoremstyle{citeddef}
\newtheorem{citeddef}[theorem]{Definition}
\def\l@subsection{\@tocline{2}{0pt}{2pc}{6pc}{}}
\begin{document}
\title{Coherence of absolute integral closures}

\author{Shravan Patankar}
\address{Department of Mathematics, Statistics and Computer Science\\University
of Illinois at Chicago\\Chicago, IL 60607-7045\\USA}
\email{\href{mailto:spatan5@uic.edu}{spatan5@uic.edu}}
\urladdr{\url{https://spatan5.people.uic.edu/}}

\makeatletter
  \hypersetup{
    pdfauthor={Shravan Patankar},
    pdfsubject=\@subjclass,pdfkeywords={Coherence, absolute integral closures}
  }
\makeatother

\begin{abstract}
We prove that the absolute integral closure $R^{+}$ of an equicharacteristic zero noetherian complete local domain $R$ is not coherent, provided $\dim(R)\geq 2$. As a corollary, we give an elementary proof of the mixed characteristic version of the result due to Asgharzadeh and extend it to dimension $3$. Furthermore, we apply the methods of Aberbach and Hochster used to prove the positive characteristic version of this result to study F-coherent rings and our work naturally suggests a mixed characteristic analogue of a result of Smith.
\end{abstract}

\maketitle

\tableofcontents

\section{Introduction}
Throughout this article, all rings are assumed to be commutative and contain an identity element. Recall from \cite{Art71} that the \emph{absolute integral closure} of a domain $R$, denoted by $R^{+}$, is the integral closure of $R$ inside an algebraic closure of its fraction field. As Aberbach and Hochster remark in \cite[Introduction]{AH97}, although $R^{+}$ is never noetherian (if $\dim(R) \geq 1$), there is a tremendous motivation to study such algebras due to the deep and seminal results surrounding them: Hochster-Huneke \cite{HH94} and Bhatt's \cite{Bha20} results roughly state that $R^{+}$ is (cohomologically) Cohen-Macaulay in positive and mixed characteristics respectively and Smith's \cite{Smi94} work shows that $R^{+}$ has intimate connections with tight closure theory and rational singularities. We remark that although $R^{+}$ is never Cohen-Macaulay when $R$ has equicharacteristic zero if $\dim(R)\geq3$, there has been some speculation as to whether certain `almost' variants hold \cite{RSS07}. 
\par
\begin{citeddef}[{\citeleft\citen{Gla89}\citemid Def.\ on p.\
  44\citeright}]\label{def:Coh}
A ring $R$ is said to be \textbf{coherent} if every finitely generated ideal of $R$ is finitely presented.
\end{citeddef}

Clearly, noetherian rings are coherent and coherence is a natural condition to investigate in non-noetherian (for example, analytic) situations. Hence, it is natural to ask whether $R^{+}$ is coherent. We show the following:

\begin{customthm}{\ref{thm:Coho}}
Let $R$ be an equicharacteric zero (i.e. $\mathbb{Q} \subset R$) noetherian complete local domain such that $\dim(R) \geq 2$. Then $R^{+}$ is not coherent.
\end{customthm}

Our key ingredient for the proof comes from the work of Roberts, Singh, and Srinivas \cite{RSS07}. They show that there are elements of arbitrarily low degree annihilating the local cohomology of the cone over an elliptic curve in characteristic zero by constructing the annihilators explicitly in the absolute integral closure. It follows that the dagger closure of a certain ideal is non-trivial and a standard argument shows that this cannot happen if $R^{+}$ is coherent. As a corollary, we recover the mixed characteristic version of the above result due to Asgharzadeh \cite[Theorem 6.6]{Asg17} (who proved it when $\dim(R) \geq 4$) and extend it to $\dim(R) = 3$, answering his explicit question from the statement of \cite[Theorem 6.6]{Asg17}.
\begin{corollary} \label{thm:cormst}
Let $R$ be a noetherian complete local domain of mixed characteristic. If $\dim(R) \geq 3$ then $R^{+}$ is not coherent. 
\end{corollary}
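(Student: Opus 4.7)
The plan is to reduce to the hypothesis of Theorem \ref{thm:Coho} by inverting the residue characteristic $p$. Two standard facts are used throughout: absolute integral closure commutes with localization, $(S^{-1}R)^{+} = S^{-1}(R^{+})$, and coherence is preserved under localization.

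Assume for contradiction that $R^{+}$ is coherent. First I would replace $R$ by its normalization: since $R$ is henselian, $R^{N}$ is again a complete local domain of the same dimension with $(R^{N})^{+} = R^{+}$, so we may assume $R$ is normal. Then $R[1/p]$ is a noetherian normal equicharacteristic zero domain of dimension $d - 1 \geq 2$, where $d = \dim R$. Using the catenarity of $R$ together with the fact that a normal noetherian local domain of dimension at least two has infinitely many height-one primes (applied successively to $R$ and to its quotients $R/\fp_{i}$), one inductively constructs a chain $0 \subsetneq \fp_{1} \subsetneq \cdots \subsetneq \fp_{d-1}$ of primes of $R$, none containing $p$, with $\dim R/\fp_{d-1} = 1$. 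Then the image $\fn$ of $\fp_{d-1}$ in $R[1/p]$ is a maximal ideal of height $d - 1 \geq 2$, and $A := R[1/p]_{\fn}$ is a noetherian local normal equicharacteristic zero domain of dimension $\geq 2$. Localizing the (assumed coherent) ring $R^{+}[1/p] = (R[1/p])^{+}$ at the multiplicative set $R[1/p] \setminus \fn$ shows $A^{+}$ is coherent. Since $R$ is excellent, so is $A$, and analytic normality of excellent normal local rings implies that $\widehat{A}$ is a complete local noetherian equicharacteristic zero normal---and hence integral---domain of dimension $\geq 2$.

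Theorem \ref{thm:Coho} then says $(\widehat{A})^{+}$ is not coherent, and the remaining task is to derive a contradiction from the coherence of $A^{+}$. This is the main technical step, since coherence need not descend along a faithfully flat noetherian map such as $A \to \widehat{A}$. I expect the argument to go by pulling back the specific finitely generated non--finitely-presented ideal of $\widehat{A}^{+}$ produced by the Roberts--Singh--Srinivas construction underlying Theorem \ref{thm:Coho}, either via a direct descent argument exploiting the algebraic and explicit nature of that construction, or by observing that the construction already lives over a finite extension of $A$, so that the completion step is in fact unnecessary and one may apply the argument of Theorem \ref{thm:Coho} directly to the excellent local ring $A$.
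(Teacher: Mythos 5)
Your reduction steps (normalize, invert $p$, use that localization preserves coherence and commutes with absolute integral closure) are sound and match the opening moves of the paper's proof. But the proof has a genuine gap at exactly the point you flag as ``the main technical step'': after localizing at $\fn$ and completing, you invoke Theorem \ref{thm:Coho} to get that $(\widehat{A})^{+}$ is not coherent, and you then need to transfer this to $A^{+}$. No such transfer is available. The ring $(\widehat{A})^{+}$ is the absolute integral closure of a \emph{different} domain; it is neither a localization of $A^{+}$ nor a filtered colimit of copies of it along flat maps, and non-coherence does not descend along the (non-flat, non-finite) extension $A^{+} \hookrightarrow (\widehat{A})^{+}$. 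So as written the argument does not close, and your first suggested remedy (``a direct descent argument'') has no content to point to.

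Your second suggested remedy is the correct one, and it is what the paper actually does --- but executing it makes the entire detour through $\fn$, $A$, and $\widehat{A}$ unnecessary, and Theorem \ref{thm:Coho} is never applied as a black box. The paper reduces via Cohen's structure theorem to $R = V[[x_{1},\dots,x_{n}]]$ and works directly in $R[1/p]^{+}$: the ring $R[1/p]$ contains the characteristic-zero polynomial ring $V[1/p][x_{1},x_{2}]$ (here $\dim R \geq 3$ guarantees two algebraically independent $x_{i}$ survive inverting $p$), one adjoins $z$ with $z^{3}+x_{1}^{3}+x_{2}^{3}=0$, and the explicit tower of Theorem \ref{calc} lives entirely inside the absolute integral closure of $V[1/p][x_{1},x_{2},z]/(x_{1}^{3}+x_{2}^{3}+z^{3})$, hence inside $R[1/p]^{+}$, producing elements of arbitrarily low valuation in $I=((x_{1},x_{2}):_{R[1/p]^{+}}z^{2})$. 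Properness of $I$ uses only the splinter property of the normal equicharacteristic-zero ring $R[1/p][z]$; completion enters solely as a faithfully flat extension of the \emph{noetherian} ring $R[1/p][z]$ to check $z^{2}\notin(x_{1},x_{2})R[1/p][z]$, which is harmless. The contradiction with coherence of $R^{+}[1/p]$ then follows from Lemma \ref{fingen} and Lemma \ref{almo}. To repair your write-up, drop the completion of the base and the localization at $\fn$, and instead rerun the explicit construction over $R[1/p]$ as above.
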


In contrast to Asgharzadeh's proof which uses the direct summand theorem \cite{And18}, or even the methods of Aberbach and Hochster \cite{AH97} who prove the corresponding positive characteristic statement, our methods are \textit{elementary}. In particular, we do not use perfectoid geometry or any of the recent advances in mixed characteristic commutative algebra and algebraic geometry.\footnote{Shimomoto has given another proof of Asgharzadeh's theorem (in private communication, see proof of Theorem \ref{asgthm}) using the main theorems of \cite{Bha20}. Similar to Asgharzadeh's proof, it uses highly non-trivial results and is unable to achieve our $\dim(R)=3$ improvement.} Furthermore, Corollary \ref{thm:cormst} supplements the story of $R^{+}$ in mixed characteristic which has garnered a tremendous amount of interest in recent years due to \cite{Bha20} and its applications to mixed characteristic singularities and algebraic geometry \cite{BMP+20}, \cite{TY20}, \cite[Remark 4.11]{MS21}.

\par
The positive characteristic version of the above statements is due to Aberbach and Hochster \cite[Section 4]{AH97} who develop and use the famous homological properties of perfect rings (see also \cite[Section 3]{BS17}, \cite{BIM18}). Perhaps as interesting as our results are the reasons as to why our methods don't work in positive characteristic. While similar dagger closure computations work in positive characteristic \footnote{The computations are much easier since dagger closure agrees with tight closure for complete local domains by \cite{HH91}.}, we crucially use that normal equicharacteristic zero domains are splinters and hence certain elements in the dagger closure of an ideal are not in its plus closure. In positive characteristic, dagger closure agrees with plus closure in many situations and for our example (with the coefficient field of positive characteristic) by the deep result of \cite{Smi94}. To the best of the author's knowledge, the correct analogue of the main result of \cite{Smi94} in mixed characteristic is not known (\cite[Remark 5.5]{Bha20}, \cite[Remark 4.11]{MS21}). 

\par
The proof ideas of our main theorems do not show that the $p$-adic completion of $R^{+}$, $\widehat{R^{+}}$, is not coherent and we discuss progress towards it in the fourth section. Although we largely use the methods of Aberbach and Hochster, new ingredients from mixed characteristic such as the homological properties of perfectoid rings from \cite{BIM18} and the theory of coherent rings are required. Our refinement of their methods yields an application to `$F$-coherent' rings and connections to Heitmann's recent work \cite{Hei21}, homological properties of valuation rings. As mentioned above, our main theorems are intimately connected to Smith's result \cite{Smi94}. In the fifth section, we propose an analogue of Smith's result in mixed characteristic and demonstrate the desired containment for the running example in this note and what is often the first example in tight closure theory, the degree $3$ Fermat curve. The techniques used in these sections are much more sophisticated in comparison.

%Our methods do not show whether the $p$-adic completion of $R^{+}$, $\hat{R^{+}}$, is coherent. We note that while $R^{+}$ is not coherent, the perfection of $R$, $R_{perf}$, can be, and is the subject of \cite[]{Shi10}. In particular, the perfection of a regular noetherian ring is coherent (this follows from Kunz's theorem) and it an open question whether all coherent perfect rings arise this way. In the last section we use homological propeties of perfectoid rings developed in \cite[]{AH97}, \cite[]{BIM18} to make some observations regarding these questions.

%\begin{customthm}{\ref{thm:Cohmah}}
%Let $R$ be a complete local domain of mixed characteristic $(0,p)$ such that $\text{dim}(R) \geq 3$. Then the $p$-adic completion of $R^{+}$, $\hat_{R^{+}}$, is not coherent.
%\end{customthm}

\section{Preliminaries}
In this section, we recall definitions and lemmas on valuations, closures, and coherent rings which we need to prove the main theorems.

\begin{citeddef} \label{defv}
  Let $A$ be a domain. A valuation on $A$ is a function $v$ is a from A
  to $\mathbb{Q} \cup \infty$ such that
  \begin{enumerate}
      \item $v(a) = \infty$ if and only if $a=0$
      \item $v(ab) = v(a) + v(b)$ for all $a,b \in A$
      \item $v(a+b) \geq \min(v(a), v(b))$ for all $a,b \in A$
  \end{enumerate}
\end{citeddef}
We need the following standard lemma regarding coherent rings and `almost zero' elements, a similar argument and discussion appears in \cite[Proposition 2.3]{AH97}, \cite[Remark 2.4]{AS12} and \cite[Discussion 3.7, Thm. 3.8]{Shi10}:

\begin{lemma}\label{almo}
Let $(R,m)$ be a noetherian local domain and let $J$ be a finitely generated proper ideal of $R^{+}$. Then $J$ cannot contain elements which are arbitrarily small with respect to a valuation on $R^{+}$ positive on $J$. 
\end{lemma}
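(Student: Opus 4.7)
The plan is to reduce the lemma to a one-line application of the ultrametric inequality, once we know that $v$ is nonnegative on $R^+$. Write $J = (x_1, \ldots, x_n) R^+$ and set $c := \min_i v(x_i)$. Each $x_i$ is a nonzero element of $J$, so by hypothesis $v(x_i) > 0$; hence $c$ is a positive rational. The goal becomes: every nonzero $y \in J$ satisfies $v(y) \geq c$.

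First I would verify that $v \geq 0$ on $R^+$. Although Definition~\ref{defv} permits negative rational values in principle, the valuations that arise in this setting come from valuations on $\Frac(R^+)$ centered at $m$, so $v \geq 0$ on $R$ and $v > 0$ on $m$. This propagates to $R^+$ by integrality: if $a \in R^+$ satisfies a monic relation $a^N + r_1 a^{N-1} + \ldots + r_N = 0$ with $r_i \in R$, the strong triangle inequality applied to $a^N = -(r_1 a^{N-1} + \ldots + r_N)$ gives $N v(a) \geq \min_i(v(r_i) + (N-i)v(a))$. Were $v(a) < 0$, evaluating the right-hand side at the minimizing index would force some $v(r_i) < 0$, contradicting $v \geq 0$ on $R$.

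With nonnegativity in hand, the second step is immediate: for $y = \sum_{i=1}^n a_i x_i \in J$ with $a_i \in R^+$, the ultrametric inequality gives
\[
v(y) \;\geq\; \min_i \bigl( v(a_i) + v(x_i) \bigr) \;\geq\; \min_i v(x_i) \;=\; c,
\]
since $v(a_i) \geq 0$. Hence $v$ is bounded below on $J$ by the positive constant $c$, ruling out elements of arbitrarily small positive valuation. The only real obstacle is the first step: without nonnegativity of $v$ on $R^+$, a coefficient $a_i$ with very negative valuation could in principle produce an element of $J$ of arbitrarily small valuation; once that is ruled out, the rest is bookkeeping in the spirit of \cite[Prop.~2.3]{AH97}.
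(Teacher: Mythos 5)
Your closing argument --- bounding $v$ from below on $J$ by $c=\min_i v(x_i)$ via the ultrametric inequality, once $v\geq 0$ on $R^{+}$ is known --- is exactly the paper's, so the proposal is essentially correct; the differences lie in the supporting steps. The paper's proof spends most of its effort \emph{constructing} a valuation positive on $J$ (a DVR $V$ on a finite extension $S$ of $R$ containing the generators and dominating a prime containing them, extended to $R^{+}$ through the Pr\"{u}fer domain $V^{+}$); that construction is what Theorem~\ref{thm:Coho} actually invokes, and it yields $v\geq 0$ on $R^{+}$ for free because the resulting valuation ring contains $R^{+}$. You instead take the valuation as given and derive nonnegativity on $R^{+}$ by integrality over $R$. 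That step is valid, but it quietly imports the extra hypotheses $v\geq 0$ on $R$ and $v>0$ on $m$, which the literal statement (``positive on $J$'') does not supply; you justify this only by appeal to ``the valuations that arise in this setting.'' The appeal is consistent with how the lemma is used, but it can be dispensed with entirely: if $v(a)<0$ for some $a\in R^{+}$ and $x$ is any nonzero element of $J$, then $a^{n}x$ is a nonzero element of $J$ with $v(a^{n}x)=nv(a)+v(x)\to-\infty$, contradicting positivity of $v$ on $J$. So positivity on the (nonzero) ideal $J$ alone forces $v\geq 0$ on $R^{+}$, and with that substitution your proof closes with no contextual assumptions --- for the literal statement this is arguably cleaner than either your integrality step or the paper's construction, though you should note that your version does not establish the \emph{existence} of such a valuation, which is the part of the paper's proof that the main theorems rely on.
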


\begin{proof}
Let $S$ be a finite extension of $R$ which contains generators of $J$. Since $J$ is a proper ideal, there exists $P$, a prime ideal of $S$, containing $J$ and the generators of $J$. There exists a DVR $(V, tV)$ such that $S \subseteq V \subseteq K$ where $K = Frac(S)$ and $P = S \cap tV$. Note that this implies that valuations of elements in $R$ but not in $m$ is $0$. We can extend the valuation to $R^{+}$ by the discussion below. This shows there exists a valuation positive on $J$. Let $\alpha$ be the minimum of the valuations of the elements generating $J$. Since $v(a) \geq \alpha$ for all $a$ \emph{generating} $J$, we have that $v(a) \geq \alpha$ for all elements of $J$ (by properties listed in Definition \ref{defv}) and hence we are done.
\\
The existence and extension to $R^{+}$ of valuation rings above is well known and is mentioned without a reference in \cite[Discussion 3.7]{Shi10}, \cite[page 237]{HH91}, and \cite[page 238]{RSS07}; we give a few more details at the suggestion of the referee. The DVR $V$ above exists by \cite[Theorem 6.4.3]{HS18}. We have $S \subseteq V \subseteq K$ and hence we have $S^{+} \subseteq V^{+} \subseteq K^{+}$. $V^{+}$ has a maximal ideal containing the maximal ideal of $V$ by the Cohen-Seidenberg theorems. $V^{+}$ need not be a valuation domain in general (it is if $V$ is henselian), however, it is a Pr\"{u}fer domain by \cite[Chapter VI, §8.6, Prop. 6]{Bou89} and hence localizing at the aforementioned maximal ideal gives the desired valuation. See also \cite[Page 28]{Ho07}.
\\

\end{proof}

We recall the definitions of dagger closure and plus closure:

\begin{citeddef}[{\citeleft\citen{HH91}\citeright}]
  Let $(R,m)$ be a complete local domain and let $v:R^{+} \xrightarrow{} \mathbb{Q} \cup \infty$ be a valuation positive on $m$. Let $I$ be an ideal of $R$. The dagger closure of $I$, $I^{\dagger}$ is the set of all elements $x \in R$ such that there are elements $u \in R^{+}$ of arbitrarily small valuation with $ux \in IR^{+}$
\end{citeddef}

Dagger closure was defined as an attempt to provide a characteristic free ideal closure operation with properties similar to tight closure. Hochster-Huneke \cite[Page 236]{HH91} remark that a priori, each valuation may give a different dagger closure, and show that this is not the case if $R$ has positive characteristic by proving that dagger closure agrees with tight closure (\cite[Theorem 3.1]{HH91}) for complete local domains (see \cite[Proposition 1.8]{BS12} where it is shown that dagger closure and tight closure agree for domains essentially of finite type over an excellent local ring). We do not know if dagger closure depends on the choice of the valuation if $R$ has equicharacteristic zero or mixed characteristic. Although tight closure has been studied extensively in positive characteristic, dagger closure continues to be mysterious\footnote{Some progress towards understanding dagger closure has been made by St\"{a}bler in his thesis work \cite{Sta10}.} and we refer the reader to \cite[Questions 4.1, 4.2]{RSS07} for some basic questions about dagger closure that remain unanswered. The results of this note can be viewed as an application of the theory of dagger closure.

\begin{definition}
  Let $R$ be a domain and let $I$ be an ideal of $R$. An element $a \in R$ is said to be in the plus closure of $I$, $I^{+}$, if $a \in IR^{+}$.
\end{definition}

\begin{definition}
  Let $R$ be a noetherian domain. $R$ is said to be a splinter if the inclusion map $R \rightarrow S$ splits (as a map of $R$-modules) for any module finite extension $S$ of $R$.
\end{definition}
The study of splinters has been a celebrated endeavor with a rich history. The direct summand conjecture, now a theorem, says precisely that regular rings are splinters (refer to \cite{DT20} for a survey, basic properties, and additional references). However we only need to use the following well known and basic facts (\cite[Lemma 2]{Ho73}): 
\begin{lemma}
Let $R$ be a normal equicharacteristic zero domain. Then $R$ is a splinter and hence $I^{+} = I$ for all ideals $I$ of $R$.
\end{lemma}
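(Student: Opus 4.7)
The plan is to build an explicit $R$-linear splitting of any module-finite extension $R \hookrightarrow S$ using the field trace, which is the standard Hochster-style argument in characteristic zero. First I would reduce to the case that $S$ is a domain: since $R$ is a domain, some minimal prime $\fq$ of $S$ satisfies $\fq \cap R = (0)$ (module-finiteness forces every minimal prime of $R$ to be the contraction of a minimal prime of $S$), and the composition $R \to S \to S/\fq$ is still module-finite and injective; producing a splitting of this composite produces one of $R \to S$, so we may assume $S$ is a domain.

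Next, set $K = \Frac(R)$ and $L = \Frac(S)$ and let $n = [L:K]$. Since $\mathbb{Q} \subseteq R$ the extension $L/K$ is separable, so the field trace $\mathrm{tr}_{L/K} \colon L \to K$ is a nonzero $K$-linear map with $\mathrm{tr}_{L/K}(1) = n$. Thus $\varphi \coloneqq \tfrac{1}{n}\mathrm{tr}_{L/K}$ is a $K$-linear retraction of $K \hookrightarrow L$ sending $1$ to $1$. The key point is that $\varphi$ restricts to an $R$-linear map $S \to R$: for any $s \in S$, $\mathrm{tr}_{L/K}(s)$ is (up to sign) a coefficient of the characteristic polynomial of multiplication by $s$ on $L$, which is a power of the minimal polynomial of $s$ over $K$. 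Since $s$ is integral over $R$ this minimal polynomial has coefficients that are integral over $R$, and normality of $R$ forces them into $R$; thus $\mathrm{tr}_{L/K}(s) \in R$, and dividing by $n$ (invertible in $R$ because $\mathbb{Q} \subseteq R$) gives $\varphi(s) \in R$. Hence $\varphi \colon S \to R$ splits the inclusion.

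For the second assertion, let $x \in I^{+}$, so $x \in I R^{+}$. Pick a module-finite $R$-subalgebra $S \subseteq R^{+}$ containing $x$ together with generators of $I$ such that $x \in IS$. By the splinter property just proved there is an $R$-linear retraction $\varphi \colon S \to R$ with $\varphi(1)=1$; applying $\varphi$ yields
\[
x = \varphi(x) \in \varphi(IS) = I \cdot \varphi(S) \subseteq I,
\]
so $I^{+} \subseteq I$, and the reverse containment is tautological.

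The only genuine content is the trace construction, and the one subtlety worth being careful about is that the normalized trace restricts from $L$ to an $R$-valued map on $S$; both ingredients (separability in characteristic zero, and normality of $R$ catching the integral elements of $K$) are used in an essential way, which is exactly why the argument breaks down in positive or mixed characteristic, as the paper emphasizes later.
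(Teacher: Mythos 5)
Your proof is correct and follows essentially the same route as the paper's: reduce to $S$ a domain, split via the normalized field trace $\tfrac{1}{n}\mathrm{tr}_{L/K}$ (using normality to land in $R$ and characteristic zero to divide by $n$), then deduce $I^{+}=I$ by pushing a plus-closure witness into a module-finite subextension and applying the retraction. The extra detail you supply on the minimal-prime reduction and on why the trace of an integral element lies in $R$ is a welcome elaboration but not a different argument.
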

\begin{proof}
Let $S$ be a module finite extension of $R$. We can quotient $S$ by a minimal prime and assume it is a domain. Let $d$ be the degree of the extension of the corresponding fraction fields, i.e. $[Frac(S): Frac(R)]$. We have a trace map on the fraction fields, $Tr: Frac(S) \rightarrow Frac(R)$. Trace map preserves integral elements, that is for $s\in S$, $Tr(s) \in Frac(R)$ is integral over $R$ since $S$ is integral over $R$. Since $R$ is normal, it is integrally closed in $Frac(R)$ and hence $Tr(s) \in R$. One checks that $Tr(r) = dr$ for any $r \in R$ and hence $\frac{1}{d}Tr$ gives the desired splitting $S \rightarrow R$. Let $i\in I^{+}$, then by definition $i \in IR^{+}$ and we can write $i$ as a finite sum $i = \sum a_{j}r_{j}$ where $a_{j} \in I$ and $r_{j} \in R^{+}$. Let $S$ be a finite extension of $R$ containing all $r_{j}$, then we have $i \in IS$. Since $R$ is a splinter, we have $R \rightarrow S$ splits and since $i \in R$, we have $i\in I$.
\end{proof}

\par
We need to use the following standard property of coherent rings (\cite[Thm 2.3.2 6]{Gla89}) and recall the proof for the convenience of the reader:
\begin{lemma} \label{fingen}
Let $R$ be a coherent ring. Let $I$ be a finitely generated ideal of $R$ and let $a \in R$. Then the ideal $(I:a) = \{x | xa \in I \}$ is finitely generated.
\end{lemma}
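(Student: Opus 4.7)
The plan is to apply coherence not to $I$ itself but to the larger ideal $J := I + aR$. First, $J$ is finitely generated: if $I = (i_1, \ldots, i_n)$, then $J = (i_1, \ldots, i_n, a)$. By Definition~\ref{def:Coh} applied to $J$, this ideal is finitely presented, which yields a short exact sequence
\[
0 \to K \to R^{n+1} \xrightarrow{\pi} J \to 0,
\]
where $\pi(e_j) = i_j$ for $1 \leq j \leq n$, $\pi(e_{n+1}) = a$, and $K$ is a \emph{finitely generated} submodule of $R^{n+1}$.

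Next, I would identify $(I:a)$ with the image of $K$ under the projection $\rho \colon R^{n+1} \to R$ onto the last coordinate. An element $(r_1, \ldots, r_n, r)$ lies in $K$ precisely when $ra = -\sum_{j=1}^{n} r_j i_j$, and this happens if and only if $ra \in I$, i.e.\ $r \in (I:a)$. Conversely, every $r \in (I:a)$ arises in this fashion: choose $s_1, \ldots, s_n \in R$ with $ra = \sum_j s_j i_j$ and observe that $(-s_1, \ldots, -s_n, r) \in K$. Thus $\rho(K) = (I:a)$.

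Finally, since $K$ is finitely generated and $\rho$ is $R$-linear, its image $(I:a) = \rho(K)$ is finitely generated, which is the desired conclusion. There is no genuine obstacle in this argument; its entire content is the observation that coherence upgrades ``finitely generated'' to ``finitely presented'' for the single ideal $J = I + aR$, after which the conclusion is an immediate diagram-chase on the presentation. One should note, however, that the choice to finitely present $J$ rather than $I$ is essential, since it is the relations among the generators involving $a$ that encode the colon ideal.
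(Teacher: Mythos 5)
Your argument is correct and is essentially the same as the paper's: both finitely present the ideal $(I,a)=I+aR$ using coherence, observe that the (finitely generated) relation module $K$ surjects onto $(I:a)$ via the last coordinate, and conclude. The only cosmetic difference is that the paper phrases the surjection via a second short exact sequence $0 \to K\cap F' \to K \to (I:a)\to 0$ rather than as $\rho(K)=(I:a)$.
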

\begin{proof}
Let $I = (u_{1},\cdots,u_{r})$ and let $F$ be the free module on generators $(x_{1},\cdots,x_{r+1})$. Consider the short exact sequence 
\begin{equation*}
    0 \rightarrow K \rightarrow F \rightarrow (I,a) \rightarrow 0
\end{equation*}
where the map from $F$ to $(I:a)$ is defined by $x_{i} \rightarrow u_{i}$ for $1\leq i \leq r$ and $x_{r+1} \rightarrow a$. Note that $K$ is finitely generated since $F$ is finitely generated and $(I,a)$ is finitely presented (as it is a finitely generated ideal of a coherent ring $R$). Let $F'$ be the free submodule of $F$ with generators $x_{1}, \cdots, x_{r}$. Any element of $K$ is of the form $u = a_{1}x_{1} + \cdots + a_{r+1}x_{r+1}$ where $a_{r+1} \in (I:a)$. Thus we have a map $K \rightarrow (I:a)$ defined by $u \rightarrow a_{r+1}$ and we also have the short exact sequence
\begin{equation*}
    0 \rightarrow K \cap F' \rightarrow K \rightarrow (I:a) \rightarrow 0
\end{equation*}
Hence $(I:a)$ is finitely generated.
\end{proof}

\section{Main theorems}
The following is our key ingredient:

\begin{citedthm}[{\citeleft\citen{RSS07}\citemid Example.\
  2.4\citeright}] \label{calc}
Let $k$ be a field of characteristic zero. Let $A = k[[x,y,z]]/(x^{3} + y^{3} + z^{3})$ or $A = k[x,y,z]/(x^{3} + y^{3} + z^{3})$. Then $z^{2}$ is inside the dagger closure of $(x, y)$ (with respect to any valuation $v$ of $A^{+}$ positive on $x$, $y$, and $z$ and $0$ on $k - \{0\}$). That is, there are elements of arbitrarily low valuation in the ideal $((x, y):_{A^{+}} (z^{2}))$.
\end{citedthm}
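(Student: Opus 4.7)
The plan is to construct, for each integer $n\geq 1$, an explicit element $u_{n}\in A^{+}$ with $u_{n}z^{2}\in(x,y)A^{+}$ and $v(u_{n})\to 0$ as $n\to\infty$. The geometric setup is that the projective cubic $E=\operatorname{Proj}(A)\subset\mathbf{P}^{2}_{k}$ is a smooth elliptic curve (smoothness uses $\operatorname{char}(k)=0$), and in particular $\operatorname{Pic}^{0}(E)$ is a divisible abelian group. The trivial relation $z^{3}=-(x^{3}+y^{3})\in(x,y)A$ already furnishes the annihilator $u=z$ with $v(u)=v(z)$; the whole issue is to extract ``$n$-th roots'' of this annihilator while keeping the product with $z^{2}$ inside $(x,y)A^{+}$.

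First I would identify the relevant divisors on $E$: the line $\{z=0\}$ meets $E$ in three flexes $P_{1},P_{2},P_{3}$, and each difference $P_{i}-P_{j}$ is $3$-torsion in $\operatorname{Pic}^{0}(E)$. Using divisibility of $\operatorname{Pic}^{0}(E)$, for each $n$ there exists a degree-zero divisor $D_{n}$ on $E$ (defined only over a finite extension of $k(E)$ in general) with $nD_{n}\sim P_{i}-P_{j}$. Pulling back to a finite cover $E_{n}\to E$ on which $nD_{n}$ becomes principal produces a rational function $h_{n}$ whose divisor encodes this identity. Homogenizing $h_{n}$ and clearing denominators against $x,y,z$ yields an element $u_{n}$ integral over $A$, hence lying in $A^{+}$; and the identity $nD_{n}\sim P_{i}-P_{j}$ ensures, after pairing with $z^{2}$ and absorbing the remaining poles using $z^{3}\in(x,y)A$, that $u_{n}z^{2}\in(x,y)A^{+}$.

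The main obstacle will be controlling the valuation $v(u_{n})$. Abstractly, because $nD_{n}$ is a fixed-degree divisor independent of $n$, one expects $v(u_{n})\sim 1/n$, but rigorously verifying this requires tracking how $v$ extends through a tower of finite covers $E_{n}\to E$ (which is possible by the valuation-extension discussion in the proof of Lemma~\ref{almo}) and reading off the order of vanishing of $h_{n}$ at the unique place above the center of $v$. The cleanest execution, carried out in \cite{RSS07}, performs this in explicit coordinates on $x^{3}+y^{3}+z^{3}=0$, producing the $u_{n}$ as concrete algebraic expressions; the divisibility of $\operatorname{Pic}^{0}(E)$ and the characteristic-zero hypothesis on $k$ are used precisely to guarantee that $n$-th roots of the relevant line bundles exist and behave well under specialization. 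Since the construction of $u_{n}$ does not depend on the choice of $v$, the conclusion holds for any valuation positive on $x,y,z$ and zero on $k^{\times}$, as required.
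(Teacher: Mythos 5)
Your plan identifies the right geometric source of the phenomenon (the cone over an elliptic curve, divisibility of $\operatorname{Pic}^{0}(E)$, multiplication maps), but the step you yourself flag as ``the main obstacle'' --- controlling $v(u_{n})$ for an \emph{arbitrary} valuation $v$ --- is exactly where the argument breaks down, and deferring it to \cite{RSS07} does not close it. What the multiplication-by-$n$ endomorphism (equivalently, your divisor-theoretic construction of $h_{n}$ from $nD_{n}\sim P_{i}-P_{j}$) gives directly is an element of low \emph{degree} in $((x,y):z^{2})$, where ``degree'' is the specific valuation coming from the grading; but a notion of degree extends a priori only to the graded absolute integral closure $R^{+GR}$, not to all of $A^{+}$, and there is no reason an element of degree $1/n^{2}$ has small value under an arbitrary valuation positive on $x$, $y$, $z$. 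This is precisely the issue raised in Remark~\ref{mult}: it is not known how to deduce arbitrarily low \emph{valuation} from the multiplication map, so the heuristic ``$v(u_{n})\sim 1/n$'' cannot be read off from the degree of the divisor, and your construction as stated would only yield non-coherence of $R^{+GR}$.

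The paper's proof sidesteps this entirely with an explicit tower of cube-root extensions: after a linear change of coordinates one sets $x_{1}^{3}=\theta^{1/3}x+\theta^{2/3}y$, and similarly for $y_{1},z_{1}$, so that $A_{1}=k[x_{1},y_{1},z_{1}]$ is a finite extension of $A$ isomorphic to $A$ with $-z=x_{1}y_{1}z_{1}$; iterating produces $x_{n},y_{n},z_{n}\in((x,y):_{A^{+}}z^{2})$ together with the multiplicative identities $(x_{n}y_{n}z_{n})^{3}=-z_{n-1}^{3}$. These identities are what force the valuations down for \emph{every} valuation positive on $x,y,z$: if $v(x_{n}),v(y_{n}),v(z_{n})\geq\delta>0$ for all $n$, then telescoping gives $v(z)>(2N+1)\delta$ for all $N$, a contradiction. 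To salvage your route you would need to replace the abstract appeal to divisibility of $\operatorname{Pic}^{0}(E)$ by explicit algebraic relations of this multiplicative kind, valid simultaneously for all valuations --- which is exactly what the cube-root tower accomplishes.
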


\begin{proof}
We wish to show that there are elements of arbitrarily low valuation in the ideal $((x, y):_{A^{+}} (z^{2}))$. Since  $ k[x,y,z]/(x^{3} + y^{3} + z^{3})$ is a subring of  $k[[x,y,z]]/(x^{3} + y^{3} + z^{3})$, the absolute integral closure of $k[x,y,z]/(x^{3} + y^{3} + z^{3})$ is a subring of the absolute integral closure of $k[[x,y,z]]/(x^{3} + y^{3} + z^{3})$. Hence we may work with $k[x,y,z]/(x^{3} + y^{3} + z^{3})$, as the elements we construct will be inside the absolute integral closure of $k[x,y,z]/(x^{3} + y^{3} + z^{3})$. We may assume that $k$ is algebraically closed and by a linear change of coordinates work with the ring $A = k[x,y,z]/(\theta x^{3} + \theta^{2}y^{3} + z^{3})$ where $\theta$ is a primitive third root of unity. The strategy is to embed $A$ into a copy of $A$ with `lower' grading via a `Frobenius-like' map and induction takes care of the rest. Since the embedding is constructed explicitly using equations with cube roots the elements will turn out to have arbitrarily low valuation with respect to \emph{any} valuation $v$ (positive on $x$, $y$, and $z$). Let $(x_{1}, y_{1}, z_{1})$ be such that
\begin{eqnarray*}
x_{1}^{3} = \theta^{\frac{1}{3}}x + \theta^{\frac{2}{3}}y \\
y_{1}^{3} = \theta^{\frac{1}{3}}x + \theta^{\frac{5}{3}}y \\
z_{1}^{3} = \theta^{\frac{1}{3}}x + \theta^{\frac{8}{3}}y
\end{eqnarray*}
\\
It follows from direct calculations that $A_{1} = k[x_{1}, y_{1}, z_{1}]$ is isomorphic to and contains $A$: $x$, $y$ are in $A_{1}$ as they are a linear combination of $(x_{1}, y_{1}, z_{1})$. The element $z$ is in $A_{1}$ as $(x_{1}y_{1}z_{1})^{3} = \theta x^{3} + \theta^{2}y^{3} = -z^{3}$ so $-z = x_{1}y_{1}z_{1}$, and $\theta x_{1}^{3} + \theta^{2}y_{1}^{3} + z_{1}^{3} = 0$; one checks that $(0 = \theta + \theta^{2} + 1)$ factors from the coefficients of each of $(x ,y, z)$ upon expanding. Define $A_{2}$ and $(x_{2}, y_{2}, z_{2})$ similarly. We have 
\begin{center}
    $x_{1}z^{2} = x_{1}x_{1}^{2}y_{1}^{2}z_{1}^{2} =  x_{1}^{3}y_{1}^{2}z_{1}^{2} =  (\theta^{\frac{1}{3}}x + \theta^{\frac{2}{3}}y)(y_{1}^{2}x_{1}^{2}) \in (x,y)$.
\end{center}
Also,
\begin{center}
    $x_{2}z^{2} = x_{2}x_{1}^{2}y_{1}^{2}z_{1}^{2} = x_{2}x_{1}^{2}y_{1}^{2}x_{2}^{2}y_{2}^{2}z_{2}^{2} = x_{2}^{3}x_{1}^{2}y_{1}^{2}y_{2}^{2}z_{2}^{2} = (\theta^{\frac{1}{3}}x_{1} + \theta^{\frac{2}{3}}y_{1})x_{1}^{2}y_{1}^{2}y_{2}^{2}z_{2}^{2} \in (x_{1}^{3}, y_{1}^{3}) \in (x,y)$.
\end{center}
Similarly we get that $x_{n}, y_{n}, z_{n}$ are in the ideal $((x,y): z^{2})$. 
\par
We check that the valuations become arbitrarily low as we iterate this process. Note that the formulae for $x_{n}$, $y_{n}$, $z_{n}$ imply that they have positive valuation. Suppose for the sake of contradiction there is a real number $\delta > 0$ such that $v(x_{n}), v(y_{n}), v(z_{n})$ is greater than or equal to $\delta$. Pick a large positive integer $N$ such that $(2N+1)\cdot \delta > v(z)$. Since $(x_{n}y_{n}z_{n})^{3} = -z_{n-1}^{3}$ for all $n$ by construction, we have $v(z_{N-1}) = v(x_{N}) + v(y_{N}) + v(z_{N}) > 3\delta$. Similarly, $v(z_{N-2}) = v(x_{N-1}) + v(y_{N-1}) + v(z_{N-1}) > \delta + \delta + 3\delta = 5\delta$. Repeating this $N$ times we get $v(z) = v(x_{1}) + v(y_{1}) + v(z_{1}) > (2N+1)\delta > v(z)$, a contradiction. Hence there are elements of arbitrarily low valuation in the ideal $((x,y):_{A^{+}} z^{2})$ for any valuation positive on $x$ and $y$.
\end{proof}

\begin{remark}
The above construction without the linear change of coordinates is given in \cite[Example 3.4]{Sin09}.
\end{remark}

\begin{remark}\label{mult}
A similar statement as Theorem \ref{calc} can be deduced using (algebro-)geometric ideas, see \cite[Theorem 3.4]{RSS07}. Since $A$ is (the completion of the cone over) an elliptic curve, there is a multiplication by $n$ endormorphism of rings of $A$ which has degree $n^{2}$ (in general morphisms of projective varieties do not correspond to morphisms of the corresponding graded rings, the statement here uses vanishing theorems of line bundles on abelian varieties). It follows that there are elements of degree $\frac{1}{n^{2}}$ in $((x, y): z^{2})$, and by iterating this map one gets that there are elements of arbitrarily low degree in the same ideal. It is a natural question whether one can conclude there are elements of arbitrarily low \emph{valuation} in this ideal for a valuation $v$ on $R^{+}$ as in Theorem \ref{calc} above using the multiplication map, however we do not know the answer \footnote{There seems to be a subtle change of language in literature in the context of the above dagger closure example while relating `low degree' and `low valuation' elements. \cite[last paragraph, Page 5]{RSS07} says that `` The examples are $\mathbb{N}$-graded, and in such cases it is natural to use the valuation arising from the grading: $v(r)$ is the least integer $n$ such that the $n$-th degree component of $r$ is nonzero" where as the subsequent work \cite[Page 10]{Sin09} mentions ``We focus next on an example; for graded domains, we use the grading in lieu of a valuation."}. A priori, a notion of a degree can only be extended to $R^{+GR}$ (refer to Section 3, \cite{RSS07}) and hence one would only get the non-coherence of $R^{+GR}$ this way.
\end{remark}

Here is our main theorem:
\begin{customthm}{A}\label{thm:Coho}
Let $R$ be an equicharacteric zero (i.e. $\mathbb{Q} \subset R$) noetherian complete local domain such that $\dim(R) \geq 2$. Then $R^{+}$ is not coherent.
\end{customthm}
\begin{proof}
Cohen Structure Theorem implies $R$ is module finite over a power series ring over a field. Hence we may assume $R$ itself is isomorphic to $k[[x_{1},\cdots,x_{{n}}]]$. The ideal $I = ((x_{1}, x_{2}):_{R^{+}} z^{2})$ where $z \in R^{+}$ is such that $z^{3} + x_{1}^{3} + x_{2}^{3} = 0$, is proper: if it contained $1$, we would have $z^{2} \in (x_{1}, x_{2})R^{+}$. However, $R[z]$ is normal and as normal rings are splinters (here we are using $R$ has equicharacteristic zero) we have that $z^{2} \in (x_{1}, x_{2})R[z]$. If $R$ is a ring then $R[[x]]$ is faithfully flat over $R$: it is flat over $R$ as it is flat over the polynomial ring $R[x]$ (which is free over $R$) and is faithful as there is a surjection $R[[x]] \rightarrow R$ sending $x$ to $0$. Hence $R[z]$ is faithfully flat over $k[[x_{1}, x_{2}]][z]$ and it follows that $z^{2} \in (x_{1}, x_{2})k[[x_{1}, x_{2}]][z]$ which is not true as for example the class [$\frac{z^{2}}{x_{1}x_{2}}$] spans the degree zero part of the top local cohomology of $k[[x_{1}, x_{2}]][z]$ (\cite[Example 2.4]{RSS07}). If $R^{+}$ were coherent this ideal would be finitely generated by Lemma \ref{fingen} (here we are using coherence). Consider the valuation arising in Lemma \ref{almo}, that is choose a extension of $R$ containing the generators of $I$ and choose a valuation dominating a prime ideal containing $I$ and all $x_{i}$. Then there are elements of arbitrarily low valuation in $I$ by Theorem \ref{calc}, which is a contradiction by Lemma \ref{almo}. 
\end{proof}

\begin{remark} \label{smi}
It is interesting to note why the above strategy fails if $k$ has positive characteristic. Indeed by standard tight closure computations, $z^{2}$ is inside the tight closure (which is the same as dagger closure in complete local domains by \cite{HH91}) of $(x_{1},x_{2})$ and hence one gets elements of arbitrarily low valuation in $((x_{1}, x_{2}):_{R^{+}}z^{2})$. However, we are crucially using the fact that normal rings are splinters in equicharacteristic zero. One cannot conclude in general that this ideal is proper and in fact it is not in this case: by a deep result of Smith \cite{Smi94}, tight closure is plus closure for ideals generated by systems of parameters (under mild assumptions) and hence $z^{2}$ will in fact be inside $(x_{1}, x_{2})R^{+}$. 
\end{remark}

Asgharzadeh's result follows immediately from (the proof of) Theorem \ref{thm:Coho} after localizing and we extend his results to dimension $3$, answering his question from \cite[Theorem 6.6]{Asg17}: 

\begin{corollary}\label{thm:corm}
Let $R$ be a noetherian complete local domain of mixed characteristic $(0,p)$. If $\dim(R) \geq 3$ then $R^{+}$ is not coherent.
\end{corollary}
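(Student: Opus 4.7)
The plan is to reduce the mixed characteristic statement to Theorem~\ref{thm:Coho} by inverting $p$. Coherence passes to localizations, so if $R^{+}$ were coherent then so would be $R^{+}[1/p]$, which equals $R[1/p]^{+}$ by the standard argument that absolute integral closure commutes with inverting an element. Writing $K = \Frac(V)$, where $V \subseteq R$ is the complete DVR supplied by Cohen's structure theorem, this ring contains $V^{+}[1/p] = \overline{K}$, so $R^{+}[1/p]$ is a $\overline{K}$-algebra and therefore equicharacteristic zero. It suffices to exhibit a proper finitely generated ideal of $R^{+}[1/p]$ containing elements of arbitrarily small valuation, in the style of the proof of Theorem~\ref{thm:Coho}.

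By Cohen's theorem, $R$ is module-finite over $V[[x_1, \ldots, x_{d-1}]]$ with $d = \dim R \geq 3$, so $x_1, x_2$ are algebraically independent over $V$ (hence over $\overline{K}$), and $(p, x_1, x_2)$ is part of a system of parameters of $R$. Pick $z \in R^{+}$ with $z^{3} + x_1^{3} + x_2^{3} = 0$, and set $J := ((x_1, x_2) :_{R^{+}[1/p]} z^{2})$. Since $-x_1^{3} - x_2^{3}$ factors as a product of three distinct irreducibles in the UFD $\overline{K}[x_1, x_2]$ and hence is not a cube, the polynomial $T^{3} + x_1^{3} + x_2^{3}$ is irreducible over $\overline{K}(x_1, x_2)$; thus the $\overline{K}$-subalgebra $B$ of $R^{+}[1/p]$ generated by $x_1, x_2, z$ is isomorphic to $\overline{K}[X, Y, Z]/(Z^{3} + X^{3} + Y^{3})$, and normality of $R^{+}[1/p]$ gives $B^{+} \subseteq R^{+}[1/p]$. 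Theorem~\ref{calc} then produces elements $u_n \in B^{+}$ with $u_n z^{2} \in (x_1, x_2) B^{+}$ whose valuations tend to zero (for any valuation positive on $x_1, x_2$), all of which lie in $J$.

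The main obstacle is to show that $J$ is \emph{proper}: the splinter argument used in Theorem~\ref{thm:Coho} for $R[z]$ fails directly in mixed characteristic and must be recovered by inverting $p$. After replacing $R[z]$ by its normalization in $R^{+}$ (which does not change $R^{+}$), the ring $R[z][1/p]$ is a noetherian normal equicharacteristic zero domain, hence a splinter. If $z^{2} \in (x_1, x_2) R^{+}[1/p] = (R[z][1/p])^{+}$, the splitting would force $z^{2} \in (x_1, x_2) R[z][1/p]$. But $R[z][1/p]/(x_1, x_2) \cong (R[1/p]/(x_1, x_2))[T]/(T^{3})$, in which the class $T^{2}$ is nonzero because $R[1/p]/(x_1, x_2) \neq 0$: no power of $p$ lies in $(x_1, x_2) R$, since $p, x_1, x_2$ extend to a system of parameters of $R$. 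Hence $z^{2} \notin (x_1, x_2) R[z][1/p]$, and $J$ is proper.

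Finally, the proof of Lemma~\ref{almo} adapts directly to $R^{+}[1/p]$: under the hypothetical coherence, Lemma~\ref{fingen} forces $J$ to be finitely generated; clearing denominators places its generators in a module-finite normal extension $S$ of $R$, where properness of $J$ after inverting $p$ yields a prime $P$ of $S$ containing those generators (and $x_1, x_2$) but not $p$. A DVR dominating $S_{P}$ produces a valuation on $\Frac(S)$ with $v(p) = 0$ and $v$ positive on $x_1, x_2, z$, which extends to $R^{+}[1/p]$; every element of $J$ then has valuation at least $\min_i v(g_i) > 0$, contradicting $v(u_n) \to 0$. Therefore $R^{+}[1/p]$ is not coherent, and hence neither is $R^{+}$.
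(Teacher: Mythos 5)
Your argument follows the paper's route essentially step for step: localize at $p$ (coherence passes to localizations and $R^{+}[1/p]=R[1/p]^{+}$), import the dagger-closure computation of Theorem \ref{calc} via the subring $\overline{K}[x_1,x_2,z]/(z^3+x_1^3+x_2^3)$, use the splinter property of normal equicharacteristic-zero domains to show the colon ideal is proper, and conclude with Lemmas \ref{fingen} and \ref{almo}. The only real deviation is in the properness check: the paper passes to a faithfully flat completion of $R[1/p]$ and reruns the argument of Theorem \ref{thm:Coho}, whereas you compute $R[z][1/p]/(x_1,x_2)$ directly and use that $p\notin\sqrt{(x_1,x_2)}$ (correctly identifying this as where $\dim R\geq 3$ enters); both work, and yours is arguably more self-contained.

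One small inconsistency needs repair. You replace $R[z]$ by its normalization $S$ so that $S[1/p]$ is normal and hence a splinter; the splitting then gives $z^{2}\in(x_1,x_2)S[1/p]$. But the presentation $R[z][1/p]/(x_1,x_2)\cong(R[1/p]/(x_1,x_2))[T]/(T^{3})$ holds for the \emph{un-normalized} ring $R[T]/(T^{3}+x_1^{3}+x_2^{3})$, and a priori $(x_1,x_2)S[1/p]\cap R[z][1/p]$ could be larger than $(x_1,x_2)R[z][1/p]$, so the two halves of your argument live in different rings. The fix is short: first reduce to $R=V[[x_1,\dots,x_n]]$ (harmless, as $R^{+}$ is unchanged under module-finite extension, and this also guarantees the irreducibility of $T^{3}+x_1^{3}+x_2^{3}$ over $\Frac(R)$ that your quotient computation silently uses). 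Then $R[z][1/p]$ is a hypersurface, hence Cohen--Macaulay, and with $3$ invertible its singular locus is contained in $V(z,x_1,x_2)$, of codimension $2$; Serre's criterion gives that $R[z][1/p]$ is already normal, so no replacement is needed and your quotient computation applies verbatim.
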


\begin{proof}
Cohen Structure Theorem implies that $R$ is module finite over a power series ring over a DVR, and hence we may assume $R=V[[x_{1},\cdots, x_{n}]]$ where $V$ is a DVR. $R_{p} = R[\frac{1}{p}]$ need not be a complete local domain in equicharacteristic zero, however, $x_{1}, \cdots, x_{n}$ are still algebraically independent elements in $R_{p}$. Arbitrary localizations of coherent rings are coherent \cite[Thm 2.4.2]{Gla89} and absolute integral closures commute with localization, hence to show that $R^{+}$ is not coherent it is enough to show that $R^{+}[\frac{1}{p}] = R_{p}^{+}$ is not coherent. One similarly observes that there are arbitrarily small elements in $I = ((x_{1},x_{2}):_{R_{p}^{+}}z^{2})$; slight care is required here - $R_{p}$ may not be or contain a two dimensional \emph{power series} ring over a characteristic zero field, however it does contain a two dimensional polynomial ring over a characteristic zero field (as $x_{1}$ and $x_{2}$ are algebraically independent) and one can repeat the construction and calculations of Theorem \ref{calc} for the ring $V[\frac{1}{p}][x_{1}, x_{2}, z]/(x_{1}^{3} + x_{2}^{3} + z^{3})$. Choosing a valuation $v$ positive on $I$ as in the proof of Lemma \ref{almo}, we have that there are elements of arbitrarily low valuation in $I$ by Theorem \ref{calc}. This ideal is proper, as if $1 \in I$ then as $R_{p}[z]$ is normal it is a splinter and we have $z^{2} \in (x_{1},x_{2})R_{p}$. The completion of $R_{p}$ is $V[\frac{1}{p}][[x_{1}, \cdots, x_{n}, z]]/ (x_{1}^{3} + x_{2}^{3} + z^{3})$. It suffices to show the containment does not hold in the $(x_{1},\cdots, x_{n})$-adic completion as completion is faithfully flat, for which one can run the same argument as in Theorem \ref{thm:Coho} (which we can apply as $V[\frac{1}{p}]$ is a characteristic zero field). If $R_{p}^{+}$ was coherent we would have that the ideal $I$ is finitely generated which is not possible by Lemma \ref{almo}.
\end{proof}
In particular we do not use the direct summand theorem \cite{And18} or any of the recent advances in mixed characteristic commutative algebra (for example, \cite{Bha20}). Kazuma Shimomoto has given (in private communication) another proof of Asgharzadeh's theorem \cite[Theorem 6.6]{Asg17} using the main results of \cite{Bha20} which we reproduce here, with permission, for the sake of completeness. Similar to Asgharzadeh's methods it uses highly non-trivial results and is unable to achieve our $\dim(R)=3$ improvement. We note that Shimomoto's proof allows us to relax the assumptions of Asgharzadeh's theorem from a complete local domain to an excellent local domain, since the corresponding statements from \cite{Bha20} and \cite{BMP+20} have (the weaker) excellence assumptions. The author thanks Bhargav Bhatt for pointing this out.
\\
\begin{theorem} \label{asgthm}
  Let $R$ be an excellent local domain of mixed characteristic $(0,p)$. If $\dim(R) \geq 4$ then $R^{+}$ is not coherent.
\end{theorem}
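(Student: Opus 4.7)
The approach is to adapt the Aberbach-Hochster non-coherence argument \cite{AH97} for $R^{+}$ in positive characteristic, replacing the Cohen-Macaulayness of $R^{+}$ (Hochster-Huneke) used there with Bhatt's mixed characteristic analog from \cite{Bha20}: for $R$ excellent noetherian local of mixed characteristic, $R^{+}$ (more precisely, its $p$-adic completion $\widehat{R^{+}}$) is cohomologically Cohen-Macaulay. In the mixed characteristic setting this translates, via the almost mathematics framework, into the existence of elements of arbitrarily small $p$-adic valuation in $R^{+}$ that almost annihilate the relevant Koszul cohomology.

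Concretely, since $\dim(R) \geq 4$, I would first choose a system of parameters of the form $p, x_{1}, x_{2}, x_{3}$ for $R$. Bhatt's theorem then yields, for every $n$, an element $\pi_{n} \in R^{+}$ of $p$-adic valuation $1/p^{n}$ with $\pi_{n} \cdot \bigl((p, x_{1}, x_{2}) :_{R^{+}} x_{3}\bigr) \subseteq (p, x_{1}, x_{2}) R^{+}$. Following the template of \cite[Section 4]{AH97}, one then constructs a finitely generated ideal $I \subset R^{+}$ and an element $a \in R^{+}$ such that the colon ideal $I :_{R^{+}} a$ is proper and contains the $\pi_{n}$ for all $n$. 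Combining Lemma \ref{fingen} (coherence forces $I :_{R^{+}} a$ finitely generated) with Lemma \ref{almo} (a proper finitely generated ideal cannot contain elements of arbitrarily small valuation) then yields the desired contradiction.

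The principal obstacle, and the reason this argument (like Asgharzadeh's original proof) is restricted to $\dim(R) \geq 4$, lies in ensuring that the colon ideal in question is proper. Bhatt's theorem shows that the quotient $\bigl((p, x_{1}, x_{2}) :_{R^{+}} x_{3}\bigr)/(p, x_{1}, x_{2}) R^{+}$ is almost zero, but one must separately verify that it is genuinely non-zero; this requires enough room in $R$ to set up a non-trivial Koszul-type obstruction, namely three algebraically independent parameters $x_{1}, x_{2}, x_{3}$ in addition to $p$, which is exactly the $\dim(R) \geq 4$ requirement. Attaining $\dim(R) \geq 3$ appears to require a fundamentally different technique, such as the dagger-closure argument of Corollary \ref{thm:corm}, which bypasses the Koszul obstruction by directly exhibiting small-valuation elements in a colon ideal whose properness follows from the splinter property of normal equicharacteristic zero rings.
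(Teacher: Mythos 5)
There is a genuine gap at the core of your argument: the colon ideal you propose cannot carry the contradiction. Bhatt's result is not merely an almost statement in the direction you need --- \cite[Corollary 5.11]{Bha20} (equivalently, big Cohen--Macaulayness of $\widehat{R^{+}}$ from \cite[Corollary 2.9]{BMP+20} together with $R^{+}/p^{n} \simeq \widehat{R^{+}}/p^{n}$) shows that $p, x_{1}, x_{2}, x_{3}$ is an honest regular sequence on $R^{+}$. Hence $\bigl((p, x_{1}, x_{2}) :_{R^{+}} x_{3}\bigr) = (p, x_{1}, x_{2})R^{+}$ exactly: the quotient you flag as needing to be ``genuinely non-zero'' is provably zero, the colon ideal is finitely generated, and Lemmas \ref{fingen} and \ref{almo} give no contradiction. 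The failure of Cohen--Macaulayness of $R^{+}$ in mixed characteristic dimension $\geq 4$ is witnessed only by systems of parameters avoiding $p$, i.e.\ after inverting $p$, where one sits in equicharacteristic zero; there the small-valuation elements come from the dagger closure computation of \cite{RSS07}, not from Bhatt's theorem, and that route is precisely Corollary \ref{thm:corm}, which already achieves $\dim(R)\geq 3$. Your reading of the template of \cite[Section 4]{AH97} is also off: in positive characteristic $R^{+}$ \emph{is} Cohen--Macaulay, so those colon ideals are trivial there too; Aberbach and Hochster instead use homological properties of perfect rings to show coherence forces a G.C.D.\ domain and then contradict this via divisor class groups. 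The mixed characteristic analogue of that argument is Theorem \ref{mixed}, and the paper explicitly records that it does not know how to derive a contradiction from it.

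The paper's actual proof (due to Shimomoto) is entirely different and shorter: if $R^{+}$ were coherent, its $p$-adic completion would be flat over $R^{+}$ by \cite[Theorem 3.58]{Has11}, hence faithfully flat since $p$ lies in every maximal ideal of $R^{+}$ and $\widehat{R^{+}}/p \simeq R^{+}/p$. Big balanced Cohen--Macaulayness of $\widehat{R^{+}}$ (\cite[Corollary 2.9]{BMP+20}) would then descend to $R^{+}$ itself, contradicting the failure of Cohen--Macaulayness of $R^{+}$ when $\dim(R)\geq 4$ (localize at $p$ and invoke the equicharacteristic zero failure in dimension $\geq 3$). If you want a colon-ideal argument in mixed characteristic, you must use parameters not involving $p$ --- which is exactly Corollary \ref{thm:corm}.
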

\begin{proof}
(Shimomoto) If $R$ is an excellent local domain the $p$-adic completion of $R^{+}$, $\widehat{R^{+}}$ is a big balanced Cohen-Macaulay algebra by \cite[Corollary 2.9]{BMP+20}. In general completions of non-noetherian rings may not be (faithfully) flat over the ring, however, if $R^{+}$ is coherent, completions with respect to finitely generated ideal are flat by \cite[Theorem 3.58]{Has11}. Hence $\widehat{R^{+}}$ is flat over $R^{+}$. Since $R$ is local $p$ is in all maximal ideals of $R^{+}$ (by the Cohen-Seidenberg theorems) and $\widehat{R^{+}}/p \simeq R^{+}/p$ it follows that $\widehat{R^{+}}$ is faithfully flat over $R^{+}$. This implies $R^{+}$ is big balanced Cohen-Macaulay, which is not true if $\dim(R)\geq 4$ by \cite[Proposition 3.6]{AH97} or \cite[Proposition 2.1]{ST21} (the contradiction is obtained by a standard trick of localizing at $p$ and using that $R^{+}$ is not Cohen-Macaulay if $\dim(R)\geq 3$ when $R$ has characteristic zero). This strategy fails when $dim(R) = 3$ because $R^{+}$ itself \emph{is} Cohen-Macaulay in this situation by \cite[Corollary 2.3]{ST21}.
\end{proof}
\par

\begin{remark}
 One can observe that the non-coherence of absolute integral closures (in characteristic zero) relies on the non-triviality of dagger closure and the fact that normal rings (even non-noetherian ones) are splinters. Our complete local domain assumption allows us to reduce to the case of the explicit Theorem \ref{calc}, where we know the dagger closure is non-trivial. Hence it seems that the complete local domain assumption should not be essential in the statements of our main results (Theorem \ref{thm:Coho}, Corollary \ref{thm:corm}).
\end{remark}

\begin{remark} \label{coherence}
Note that when $\dim(R) = 1$, $R$ is regular if it is normal and finite domain extensions of normal dimension $1$ domains are flat. Hence $R^{+}$ \emph{is} coherent as it is a (flat) colimit of normal domains (a flat direct limit of coherent rings is coherent, \cite[Theorem 2.3.3]{Gla89}). The interested reader can observe that this also follows from the fact that $R^{+}$ is a Pr\"{u}fer domain. When $R$ has positive characteristic, Aberbach and Hochster show that $R^{+}$ is not coherent for any noetherian domain $R$ if $\dim(R) \geq 3$ and if $R$ is a complete local domain containing a field of positive transcendence degree over a finite field if $\dim(R) = 2$. It is not known whether $R^{+}$ (where $R$ is a complete local domain) is coherent in positive and mixed characteristics when $\dim(R) = 2$ in complete generality. It is noteworthy that we are able to settle the coherence problem in all dimensions (for complete local domains) in characteristic zero.  
\end{remark}

\section{Coherence in mixed characteristic}

Let $R$ be a domain of mixed characteristic $(0,p)$. Throughout this section we will denote the $p$-adic completion of $R^{+}$ by $\widehat{R^{+}}$. In mixed characteristic \cite[Corollary 5.17]{Bha20} says that $\widehat{R^{+}}$ is Cohen-Macaulay where $R$ is a complete local domain of mixed characteristic (where as $R^{+}$ is merely `cohomologically Cohen-Macaulay', i.e. $H^{i}_{m}(R^{+}) = 0$ for $0\leq i \leq \dim(R)-1$, $m$ the maximal ideal of $R^{+}$, and $H^{i}_{m}(R^{+})$ is the local cohomology, see \cite[Theorem 5.1]{Bha20}). For clarification on Bhatt's result and the precise definition(s) of Cohen-Macaulay, see \cite[Discussion 2.2]{BMP+20}. Our methods do not show whether $\widehat{R^{+}}$ is coherent. 

\begin{question}\label{que1}
Let $R$ be a noetherian complete local domain of mixed characteristic. Is the $p$-adic completion of $R^{+}$, $\widehat{R^{+}}$, coherent?
\end{question}

In the positive characteristic version of our results, Aberbach and Hochster first use the homological properties of perfect rings to (implicitly) show that if $\widehat{R^{+}}$ is coherent then it is a G.C.D domain, see proof of \cite[Proposition 4.4]{AH97} and ``Thus $a$ and $b$ have a G.C.D in $R_{s}^{+}$''. Elementary properties of divisor class groups then imply that every complete local domain has a torsion divisor class group, which is not true if dimension of the ring is $3$ or more. Using the corresponding homological properties of perfectoid rings, we are able to obtain the following partial result in mixed characteristic by mirroring their methods:
\begin{theorem}\label{mixed}
Let $R$ be a complete noetherian local domain of mixed characteristic. If $\widehat{R^{+}}$ is coherent then it is a G.C.D domain.
\end{theorem}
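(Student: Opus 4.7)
The plan is to mirror the structure of \cite[Proposition 4.4]{AH97}, where Aberbach and Hochster extracted the G.C.D.\ property from coherence by leveraging the Frobenius flatness of $\widehat{R^{+}}$ in positive characteristic. In mixed characteristic the substitute for Frobenius flatness is the homological theory of perfectoid rings developed in \cite{BIM18}, which applies because $\widehat{R^{+}}$ is perfectoid by \cite{Bha20}.

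Write $S := \widehat{R^{+}}$ and fix two nonzero elements $a, b \in S$. Exhibiting a G.C.D.\ for $a$ and $b$ is equivalent to showing that the syzygy module $K := \{(s, t) \in S^{2} : sa = tb\}$ is a free $S$-module of rank $1$, which in turn is equivalent to principality of $aS \cap bS$. Coherence of $S$ gives that $(a, b)$ is finitely presented, so $K$ is finitely generated; applying Lemma \ref{fingen} to the colon ideal $(aS :_{S} b)$ upgrades this to $K$ being finitely presented. The heart of the argument is then to promote ``finitely presented'' to ``principal''.

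The strategy is to show $K$ is flat, since over the coherent domain $S$ a finitely presented flat module is projective, and then the generic rank one calculation in $\mathrm{Frac}(S)$ together with the connectedness of $\Spec(S)$ forces $K \cong S$. To verify flatness it suffices to establish $\mathrm{Tor}_{1}^{S}(S/aS, S/bS) = 0$, which I would attack using the homological control of finitely generated ideals over perfectoid rings from \cite{BIM18} together with the compatible $p$-power roots in $\widehat{R^{+}}$ inherited from $R^{+}$. A natural technical device is to tilt to the perfect $\mathbf{F}_{p}$-algebra $S^{\flat}$, where the corresponding $\mathrm{Tor}$-vanishing is supplied by the Aberbach--Hochster argument, and then untilt through the perfectoid correspondence.

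The main obstacle is this $\mathrm{Tor}$-vanishing step. In positive characteristic, the fact that Frobenius is an isomorphism on $R^{+}$ makes the relevant vanishing essentially formal. In mixed characteristic only the perfectoid Frobenius is available, and carefully transferring information between $S$, $S/p$, and $S^{\flat}$ while keeping track of the ideal $(a, b)$ --- in particular separating the genuine $p$-torsion contribution from the ``almost zero'' contribution allowed by \cite{BIM18} --- is the delicate step where genuine work beyond a direct translation of \cite[Proposition 4.4]{AH97} is required.
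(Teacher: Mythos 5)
Your high-level plan (mirror \cite[Proposition 4.4]{AH97} with perfectoid homological algebra from \cite{BIM18} replacing perfectness) is the right instinct and matches the paper's motivation, but the concrete step you propose for the ``heart of the argument'' is wrong, and it is wrong in two ways. First, $\mathrm{Tor}_{1}^{S}(S/aS, S/bS) \cong (aS \cap bS)/abS$, so the vanishing you want to prove is equivalent to $aS \cap bS = abS$, i.e.\ to $a$ and $b$ having \emph{trivial} G.C.D.; this is false whenever $a$ and $b$ share a common factor (take $a=b$: the Tor is $aS/a^{2}S \neq 0$). It is also not the criterion for flatness of the syzygy module $K$ --- for that you would need finite flat dimension of $S/(a,b)$ itself, not vanishing of a single Tor against $S/bS$. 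Second, the tilting device cannot supply the needed homological control for an arbitrary pair $a,b$: the input from \cite{BIM18} (their Lemma 3.7) bounds the flat dimension of $S/I$ only for ideals $I$ that contain $p$ up to radical, precisely because those are the ideals that correspond to ideals of the tilt $S^{\flat}$. A general finitely generated ideal $(a,b)$ of $\widehat{R^{+}}$ does not contain $p$ and has no counterpart on the tilted side, so ``untilting through the perfectoid correspondence'' has nothing to untilt.

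The paper routes around exactly this obstruction by applying \cite[Lemma 3.7]{BIM18} to the one finitely generated-up-to-radical ideal where it does apply: the maximal ideal $m = \sqrt{(p, x_{1}, \dots, x_{n-1})}$ of $S = \widehat{R^{+}}$, which contains $p$. This gives $\mathrm{fd}_{S}(S/m) \leq n$. Coherence then enters not through your colon-ideal step but through minimal free resolutions: for a finitely generated ideal $I$, coherence gives a resolution of $S/I$ by finitely generated free modules, which over the local ring $S$ can be taken minimal, and the $i$-th Betti number is $\dim_{S/m}\mathrm{Tor}_{i}^{S}(S/I, S/m)$; these vanish for $i > n$ by the flat dimension bound on $S/m$, so every finitely generated ideal has finite \emph{projective} dimension. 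Thus $S$ is a coherent regular local ring in the sense of \cite[Definition on p.\ 200]{Gla89}, and the G.C.D.\ property is then quoted wholesale from \cite[Corollary 6.2.10]{Gla89} rather than re-derived by hand from syzygies. If you want to salvage your direct syzygy argument, you should replace your Tor computation with this Betti-number argument applied to $I=(a,b)$, after which $K$ is a finitely generated second syzygy of finite projective dimension and the rank-one freeness argument you sketch does close the proof; but as written, the Tor-vanishing step is a genuine gap.
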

\begin{proof}
Note that $\widehat{R^{+}}$ is a perfectoid ring \cite[Example 3.8 (2)]{BIM18}. As in the proof of Theorem \ref{thm:corm} we may assume $R$ is a power series ring over a discrete valuation ring $V[[x_{1},\cdots,x_{n-1}]]$ and hence $p,x_{1},\cdots, x_{n-1}$ form a s.o.p for $R$. Consider the radical ideal $m = \sqrt{p,x_{1},\cdots, x_{n-1}}$ where the radical is taken in $\widehat{R^{+}}$ and hence $m$ is an ideal of $\widehat{R^{+}}$. The ideal $m\widehat{R^{+}}/\sqrt{p\widehat{R^{+}}}$ of $\widehat{R^{+}}/\sqrt{p\widehat{R^{+}}}$ is generated up to radical by $n$ elements, hence by \cite[Lemma 3.7]{BIM18} flat dimension of $\widehat{R^{+}}/m$ over $\widehat{R^{+}}$ is less than or equal to $n$\footnote{The proof of this for $p$-torsion free perfectoid rings follows immediately from the corresponding properties of perfect rings.}. Note that $m$ is the maximal ideal of $\widehat{R^{+}}$: since $p\in m$, $\widehat{R^{+}}/m = R^{+}/\sqrt{p,x_{1},\cdots, x_{n-1}}$ is a field (since $R^{+}$ is an integral extension of $R$). 
\par
Let $I$ be a finitely generated ideal of $\widehat{R^{+}}$. Since $\widehat{R^{+}}$ is coherent, $\widehat{R^{+}}/I\widehat{R^{+}}$ has a resolution by finitely generated $\widehat{R^{+}}$ modules by \cite[Corollary 3.5.2]{Gla89}. Since $\widehat{R^{+}}$ is local there is a minimal resolution of $\widehat{R^{+}}/I\widehat{R^{+}}$ and the $i$th betti number is given by the vector space dimension over $\widehat{R^{+}}/m\widehat{R^{+}}$ of $Tor_{i}^{\widehat{R^{+}}}(\widehat{R^{+}}/I\widehat{R^{+}}, \widehat{R^{+}}/m)$. Since the flat dimension of $\widehat{R^{+}}/m$ is less than or equal to $n$, we have that the projective dimension of $\widehat{R^{+}}/I\widehat{R^{+}}$ over $\widehat{R^{+}}$ is less than or equal to $n$. 
\par
The above discussion implies that the projective dimension of finitely generated ideals of $\widehat{R^{+}}$ is finite. Hence $\widehat{R^{+}}$ is by definition a (non-noetherian) regular ring \cite[Definition on page 200]{Gla89}. Since $R$ is complete and local, $R^{+}$ and consequently $R^{+}/p$ are local. Hence $\widehat{R^{+}}$ is a local ring. Coherent regular local rings are G.C.D domains by \cite[Corollary 6.2.10]{Gla89}. 
\end{proof}

\begin{remark}
In the proof above we show that if $\widehat{R^{+}}$ is coherent then it is also regular and that a coherent regular local ring is a G.C.D domain (just as in the case of noetherian regular local rings) and in particular that it is an integral domain. This assertion is not obvious - the main result of Heitmann's recent work says that if $R$ is a complete local domain then $\widehat{R^{+}}$ is (surprisingly) an integral domain \emph{regardless} of our coherence (and regular) hypothesis \cite[Theorem 1.7]{Hei21}.
\end{remark}

\begin{remark}\label{val}
 As mentioned in Remark \ref{coherence}, if $R$ is regular, dimension $1$, and of positive characteristic, then $R^{+}$ is coherent and it follows from the proof of the above Theorem \ref{mixed} that projective dimension of $R^{+}/IR^{+}$ (over $R^{+}$) is $\leq1$ if $I$ is a finitely generated ideal since $R^{+}$ is a perfect ring. This may fail if the ideal is not finitely generated: if $R$ is complete and of dimension $1$ then the global dimension of $R^{+}$ is the projective dimension of $R^{+}/m$ and is equal to $2$ where $m$ is the maximal ideal of $R^{+}$ (note that it is not finitely generated). In this situation we may assume $R$ to be a complete discrete valuation ring and hence $R^{+}$ is a valuation ring (regardless of the characteristic of $R$, see proof of Lemma \ref{almo}). The result now follows from the theory of valuation rings - the global dimension of valuation rings was studied by Osofsky \cite{Oso67} and a more modern and simplified treatment sufficient for the preceding result can be found in the works of Asgharzadeh \cite[Theorem 1.1]{Asg10} and Datta \cite[Theorem 4.2.5]{Dat17}. 
 \end{remark}
 
 \begin{remark}
 In Remark \ref{val} we used the fact that absolute integral closures of complete (or more generally, henselian) valuation rings are valuation rings. This is quite useful in studying absolute integral closures in dimension $1$, or more generally, valuation rings. For example, $2$ out of $3$ proofs of `Valuation rings are derived splinters' by Antieau and Datta \cite{AD21} use the preceding fact to reduce to the case of absolutely integrally closed valuation rings. As another example, although by Remark \ref{val} the projective dimension of $R^{+}/m$ is $2$ (where $dim(R) = 1$, $R$ is complete, and $m$ is the maximal ideal of $R^{+}$), the $Tor$ dimension of $R^{+}/m$ (over $R^{+}$) is $1$ as we have a short exact sequence $0 \rightarrow m \rightarrow R^{+} \rightarrow R^{+}/m \rightarrow 0$ and $m$ is a flat because it is a torsion free module over the valuation ring $R^{+}$. The interested reader can observe that this answers \cite[Question 3.7]{AH97} when $dim(R) = 1$.  
\end{remark}

We expect $\widehat{R^{+}}$ to be not coherent as well. Unfortunately we do not know how to obtain a contradiction assuming $\widehat{R^{+}}$ is coherent given Theorem \ref{mixed}. One cannot use the same properties of divisor class groups as in \cite{AH97} as $\widehat{R^{+}}$ is not an integral extension of $R$. Motivated by the statements above, we would like to include, with permission, the following related question communicated to us by Kazuma Shimomoto:
\begin{question} \label{que2}
(Shimomoto) Does every noetherian local domain map to a coherent big Cohen-Macaulay algebra?
\end{question}

When $R$ is excellent of positive characteristic $R^{+}$ is a big Cohen-Macaulay algebra by \cite{HH94} and not coherent by \cite{AH97} hence one might suspect that the question has a negative answer. However, the answer is yes for a class of positive characteristic rings called `$F$-coherent' rings:
\begin{citeddef}[{\citeleft\citen{Shi10}\citemid Def.\ 3.2\citeright}]
  Let $R$ be a ring of positive characteristic. $R$ is said to be `$F$-coherent' if the perfect closure of $R$, $R_{perf}$, is a coherent ring.
\end{citeddef}
It is well known that under mild assumptions\footnote{such as $R$ is a quotient of a Gorenstein ring, see \cite[Theorem 3.11]{Shi10}} on the ring $R$, $R_{perf}$ is an `almost Cohen-Macaulay' algebra in positive characteristic and if it is coherent, it follows (refer to \cite[Theorem 3.11]{Shi10}) that it is Cohen-Macaulay, coherence does not allow flexibility of almost mathematics as indicated by the proofs of our main theorems above (for a more precise version of this statement see \cite[Theorem 3.12]{AS12} which roughly states that `coherence and almost Cohen-Macaulay implies (big balanced) Cohen-Macaulay'). Hence the answer to the above question is yes if the domain is $F$-coherent. It follows from Kunz's theorem that regular rings are $F$-coherent (\cite[Proposition 3.3]{Shi10}) and hence so is any purely inseparable extension of a regular ring (as both have the same perfection). The question of whether these are all $F$-coherent rings is one of the key open problems in the theory (\cite[Question 4]{Shi10}). 

\begin{remark}
If $S$ is a module finite purely inseparable extension of a regular $F$-finite ring $R$ of positive characteristic, $S$ is contained inside $R^{\frac{1}{p^{e}}}$ for $e$ large enough (take maximum of all $e$ such that the generators of $S$ as an $R$-module are contained in $R^{\frac{1}{p^{e}}}$). $R^{\frac{1}{p^{e}}}$ is isomorphic to $R$ as an algebra and hence $S$ not only has a coherent Cohen-Macaulay algebra, it has a \emph{noetherian} Cohen-Macaulay (and in fact, regular) algebra . 
\end{remark}

\begin{remark}
The proof of Theorem \ref{mixed} offers insights into the characterization of $F$-coherent rings. The proof and homological properties of perfect rings show that if $R$ is an excellent noetherian normal $F$-coherent domain then $R_{perf}$ is a G.C.D domain (see also \cite[Corollary 3.7]{Asg17}). Let $(a,b)$ be a height one ideal of $R$. As $R_{perf}$ is a G.C.D domain $a,b$ have a G.C.D $d$ in $R_{perf}$. Let $S$ be the normalization of $R[d]$, $S$ is purely inseparable over $R$ (in general normalizations of $F$-coherent rings are $F$-coherent by \cite[Theorem 3.8]{Shi10}). Standard properties of divisor class groups (see discussion after \cite[Lemma 4.3]{AH97}) imply that there are maps $Cl(R) \rightarrow Cl(S)$ and $Cl(S)\rightarrow Cl(R)$ with the latter map multiplication by $[S:R]$. Since $(a,b)$ is trivial in the divisor class group of $S$ and $S$ is purely inseparable over $R$, it follows that $(a,b)$ is $p^{\infty}$ torsion in $Cl(R)$. Hence $Cl(R)$ is $p^{\infty}$ torsion for any excellent normal noetherian $F$-coherent domain $R$. The preceding discussion shows that the divisor class group of any $F$-coherent ring and any purely inseparable extensions of it (which is also $F$-coherent) is $p^{\infty}$ torsion under mild conditions on the ring. We do not know if it is reasonable to expect a converse. Under similar assumptions, if every purely inseparable extension of a ring $R$ has a $p^{\infty}$ torsion divisor class group, or if the perfection of the ring is a G.C.D domain, is the ring $F$-coherent? $p^{\infty}$ torsion in divisor class groups has historically often shown up as a separate case which requires new methods to deal with in problems in $F$-singularity theory and algebraic geometry. See for example \cite{PS20} and \cite{Kol21}. 
\end{remark}

\section{An analogue of Smith's result}

\par 
As remarked in Remark \ref{smi}, our methods are closely related to the deep result of Smith (\cite{Smi94}), hence we would like to ask if the following analogous result holds in mixed characteristic:
\begin{question}\label{smiana}
Let $I$ be a parameter ideal of a mixed characteristic complete local domain $R$ and fix a valuation $v$ on $R^{+}$ positive on $I$. Does the dagger closure of $I$ (with respect to $v$) coincide with $I\widehat{R^{+}}\cap R$?
\end{question}

We illustrate the question in a series of propositions below using the running example in our note, $R = \mathbb{Z}_{p}[[x, y, z]]/(x^{3} + y^{3} + z^{3})$. First we explain why $z^{2}$ is not in the plus closure of $I=(x,y)$ (i.e. $IR^{+}\cap R$) and that $z^{2}$ is in fact in the dagger closure of $I$. This is implicit in the proofs of our main theorems in Section $3$ and explains why we ask for the ``$\widehat{R^{+}}$-closure of $I$'' (i.e. $I\widehat{R^{+}}\cap R$)  to coincide with the dagger closure instead of the usual plus closure of $I$. Then we show that $z^{2}$ is in $(p^{n}, x, y)R^{+}$ for every $n$ by exploiting the fact that $R$ is the (completion of) the cone over an abelian variety - the $p^{n}$ power maps on the abelian variety lift to ring endomorphisms of $R$. Then we use \cite{Bha20} to show that $z^{2}$ is in $I\widehat{R^{+}}$.

\begin{proposition}
Let $R = \mathbb{Z}_{p}[[x, y, z]]/(x^{3} + y^{3} + z^{3})$ and $I = (x,y)$, then $z^{2}$ is not in the plus closure of $I$.
\end{proposition}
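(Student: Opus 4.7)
The plan is to mimic the descent argument used in Corollary~\ref{thm:corm}: invert $p$ to pass from the mixed characteristic ring $R$ to the equicharacteristic zero ring $R_{p} := R[1/p]$, use the splinter property of normal equicharacteristic zero domains to force $(x,y)R_{p}^{+} \cap R_{p} = (x,y)R_{p}$, and finish with an explicit computation in the quotient $R/(x,y) \cong \mathbb{Z}_{p}[[z]]/(z^{3})$.

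In detail, I would suppose for contradiction that $z^{2} \in (x,y)R^{+}$. Inverting $p$ and invoking the identity $R^{+}[1/p] = R_{p}^{+}$ (whose nontrivial inclusion comes from clearing $p$-denominators in a monic integral relation by rescaling the root by a suitable power of $p$), this becomes $z^{2} \in (x,y)R_{p}^{+}$. I would then verify that $R$ is a normal domain: $R$ is a hypersurface hence Cohen--Macaulay (so $S_{2}$), and the Jacobian ideal $(3x^{2}, 3y^{2}, 3z^{2})$ cuts out the origin, a locus of codimension three in $R$, so $R$ also satisfies $R_{1}$. (This step implicitly assumes $p \ne 3$ and that $x^{3}+y^{3}+z^{3}$ remains prime in $\mathbb{Z}_{p}[[x,y,z]]$, conditions that are needed already to make $R$ a domain.) Thus $R_{p}$ is a normal equicharacteristic zero domain, and the splinter lemma applies.

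The splinter step then runs as follows: pick a module-finite extension $R_{p} \subseteq R_{p}[a,b] \subseteq R_{p}^{+}$ containing witnesses $a,b$ for $z^{2} = xa + yb$, and apply an $R_{p}$-linear splitting $R_{p}[a,b] \to R_{p}$ to this equation; since $z^{2}$ already lies in $R_{p}$ it is fixed by the splitting, yielding $z^{2} \in (x,y)R_{p}$. Clearing $p$-denominators one more time produces a relation $p^{n} z^{2} \in (x,y)R$ for some $n \ge 0$. But in $R/(x,y) \cong \mathbb{Z}_{p}[[z]]/(z^{3})$ the element $z^{2}$ is part of a free $\mathbb{Z}_{p}$-basis, so $p^{n} z^{2} \ne 0$; this is the desired contradiction.

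The only mildly technical points are the localization identity $R^{+}[1/p] = R_{p}^{+}$ and the splinter descent, both standard. Notably, this plan avoids perfectoid methods or any technology specific to mixed characteristic beyond what already appears in Section~3.
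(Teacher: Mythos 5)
Your proposal is correct and follows the paper's proof almost verbatim in its first two steps: localize at $p$ using $R^{+}[1/p] = (R[1/p])^{+}$, then use that the normal equicharacteristic zero domain $R[1/p]$ is a splinter to descend from $z^{2} \in (x,y)R[1/p]^{+}$ to $z^{2} \in (x,y)R[1/p]$. The only genuine difference is the final non-containment check: the paper passes to the $(x,y)$-adic completion $\mathbb{Q}_{p}[[x,y,z]]/(x^{3}+y^{3}+z^{3})$ (faithfully flat) and invokes the local cohomology class $[\frac{z^{2}}{xy}]$ from \cite[Example 2.4]{RSS07}, whereas you clear $p$-denominators to get $p^{n}z^{2} \in (x,y)R$ and observe that $p^{n}z^{2} \neq 0$ in $R/(x,y) \cong \mathbb{Z}_{p}[[z]]/(z^{3})$, which is free over $\mathbb{Z}_{p}$ on $1, z, z^{2}$. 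Your finish is more elementary and self-contained, and it is correct. Two minor remarks: the Jacobian ideal $(x^{2},y^{2},z^{2})$ cuts out $V(x,y,z) \cong \Spec \mathbb{Z}_{p}$, which has codimension two (not three) in the three-dimensional ring $R$ --- still $\geq 2$, so $R_{1}$ and hence normality go through; and your caveat that $p \neq 3$ (and irreducibility of $x^{3}+y^{3}+z^{3}$) is implicitly assumed in the paper's running example is well taken.
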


\begin{proof}
If $z^{2}$ was in the plus closure of $I$, as absolute integral closures commute with localization, after localizing at $p$ we have that $z^{2} \in (x,y)R[\frac{1}{p}]^{+}$. This implies $z^{2} \in (x,y)R[\frac{1}{p}]$ as $R[\frac{1}{p}]$ is a normal equi-characteristic zero domain and hence a splinter. Finally, $z^{2} \notin (x,y)R[\frac{1}{p}]$ as the containment can be checked after $(x,y)$-adic completion (which is faithfully flat) and the $I$-adic completion of $R[\frac{1}{p}]$ is $\mathbb{Q}_{p} [[x,y,z]]/(x^{3}+y^{3}+z^{3})$ where the containment does not hold.
\end{proof}

\begin{proposition}
Let $R = \mathbb{Z}_{p}[[x, y, z]]/(x^{3} + y^{3} + z^{3})$ and $I = (x,y)$, then $z^{2}$ is in the dagger closure of $I$.
\end{proposition}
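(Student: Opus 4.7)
The plan is to imitate the proof of Theorem \ref{calc} almost verbatim inside $R^{+}$, the only genuinely new point being that we work in mixed rather than equal characteristic. Since $R$ has characteristic zero fraction field, any algebraic closure of it contains all roots of unity; in particular the primitive cube root $\theta$ and its cube root $\theta^{1/3}$ are algebraic integers and so lie in $\overline{\mathbb{Z}_{p}} \subset R^{+}$.

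First I would perform the $\overline{\mathbb{Z}_{p}}$-linear change of coordinates used in Theorem \ref{calc} to bring the defining equation into the form $\theta x^{3} + \theta^{2} y^{3} + z^{3} = 0$; this does not leave $R^{+}$, because the change of variables is by algebraic integers. Next I would define $x_{1}, y_{1}, z_{1} \in R^{+}$ by $x_{1}^{3} = \theta^{1/3}x + \theta^{2/3}y$, $y_{1}^{3} = \theta^{1/3}x + \theta^{5/3}y$, $z_{1}^{3} = \theta^{1/3}x + \theta^{8/3}y$, and iterate the construction to build $x_{n}, y_{n}, z_{n} \in R^{+}$ for every $n \geq 1$. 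The same algebraic manipulations as in the proof of Theorem \ref{calc} then yield $x_{n} z^{2}, y_{n} z^{2} \in (x, y) R^{+}$ for each $n$, so we get honest witnesses for the colon ideal.

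For the valuation step I would fix a valuation $v$ on $R^{+}$ positive on the maximal ideal $(p, x, y, z)$ of $R$; the existence and extension of such a $v$ to $R^{+}$ is exactly the content of the discussion in the proof of Lemma \ref{almo}. Since $v(z) > 0$, the final iterative pigeonhole argument at the end of Theorem \ref{calc}---which uses only the relation $(x_{n} y_{n} z_{n})^{3} = -z_{n-1}^{3}$ and the finiteness of $v(z)$---goes through unchanged and forces $\min\{v(x_{n}), v(y_{n}), v(z_{n})\} \to 0$ as $n \to \infty$. This exhibits elements of arbitrarily small valuation in $\bigl((x, y) :_{R^{+}} z^{2}\bigr)$, so $z^{2}$ lies in the dagger closure of $(x, y)$.

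The one check that is not strictly automatic from the equal-characteristic argument, and the mild obstacle I would expect, is confirming that every element appearing in the construction really belongs to $R^{+}$ rather than merely to $R^{+}[\frac{1}{p}]$. This is immediate, however, because all constants that appear ($\theta$, $\theta^{1/3}$, and the coefficients of the change of variables) are algebraic integers over $\mathbb{Z}_{p}$, so the newly introduced elements satisfy monic polynomial relations over $R$ and hence live in $R^{+}$.
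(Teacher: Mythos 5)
Your argument is correct and is essentially the paper's own proof: both re-run the construction of Theorem \ref{calc} inside $R^{+}$ after observing that $\theta^{1/3}$ (a root of unity, hence a unit integral over $\mathbb{Z}_p$) lies in $R^{+}$, so the change of coordinates, the cube-root towers $x_n, y_n, z_n$, and the valuation pigeonhole all go through verbatim. The only cosmetic difference is that the paper first passes to the polynomial subring $\mathbb{Z}_p[x,y,z]/(x^3+y^3+z^3)$, whose absolute integral closure sits inside $R^{+}$, whereas you work in $R^{+}$ directly; this changes nothing.
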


\begin{proof}
The reader can observe that the same argument as in Theorem \ref{calc} works: the absolute integral closure of polynomial ring $\mathbb{Z}_{p}[x,y,z]/(x^{3}+y^{3}+z^{3})$ is inside the absolute integral closure of $R$, hence we may assume $R = \mathbb{Z}_{p}[x,y,z]/(x^{3}+y^{3}+z^{3})$. We may also enlarge $R$ and assume it contains $\theta^{\frac{1}{3}}$ where $\theta$ is the primitive third root of unity. The construction in Theorem \ref{calc} give a finite extension of $R$ isomorphic to itself. In the new extension $((x,y):z^{2})$ contains elements of lower valuation and by iterating this process the calculation performed at the end of Theorem \ref{calc} shows that there are elements of arbitrary low valuation in the colon ideal $((x,y):_{R^{+}}z^{2})$. 
\end{proof}

\begin{proposition}\label{powermap}
Let $R = \mathbb{Z}_{p}[[x, y, z]]/(x^{3} + y^{3} + z^{3})$, then $z^{2}$ is in $(p^{n}, x, y)R^{+}$ for all $n \geq 1$.
\end{proposition}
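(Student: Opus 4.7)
The plan is to exploit that $E \colon x^3+y^3+z^3 = 0$ is an elliptic curve (smooth abelian scheme) over $\mathbb{Z}_p$ when $p \neq 3$; the case $p = 3$ admits an analogous treatment after base change to a ramified extension of $\mathbb{Z}_3$. The multiplication-by-$p^n$ isogeny $[p^n] \colon E \to E$ of degree $p^{2n}$ pulls back to a finite ring endomorphism $\phi_n$ of the cone $A = \mathbb{Z}_p[x,y,z]/(x^3+y^3+z^3) \subseteq R$, with $\phi_n(x) = F_n$, $\phi_n(y) = G_n$, $\phi_n(z) = H_n$ homogeneous polynomials of degree $p^{2n}$ satisfying $F_n^3 + G_n^3 + H_n^3 = 0$. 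Since $A^+ \subseteq R^+$, it suffices to prove $z^2 \in (p^n, x, y) A^+$.

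Two elementary ingredients will drive the argument. First, $H_n \in (x,y) A$: since $H_n$ is a form of degree $p^{2n} \geq 4$ with no constant term, $H_n \in (x,y,z)^{p^{2n}} A$, and the relation $z^3 = -(x^3+y^3) \in (x,y)A$ puts every monomial $x^a y^b z^c$ with $a+b+c = p^{2n}$ into $(x,y)A$ (splitting into the cases $c < 3$ and $c \geq 3$). Second, $[p^n]$ acts as multiplication by $p^n$ on the invariant differential of $E$: choosing the flex $O = (1:-1:0)$ as origin of the group law and taking $z$ as a uniformizer in the chart $y = -1$ (where $O$ has affine coordinates $(x,z) = (1,0)$ and $E$ becomes $x^3 + z^3 = 1$), one obtains, after dividing through by the unit $-G_n(x,-1,z)$ with $G_n(1,-1,0) = -1$,
\[
H_n(x,-1,z) \;=\; p^n z \,+\, z^2\, Q(z) \quad \text{in } \widehat{\cO}_{E,O} = \mathbb{Z}_p[[z]]
\]
for some power series $Q \in \mathbb{Z}_p[[z]]$.

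To combine them, I would take a $[p^n]$-division point $Q' = (x',y',z')$ of the generic point $P = (x\!:\!y\!:\!z) \in E(\Frac(R))$; its coordinates lie in $A^+$ and satisfy $x'^3 + y'^3 + z'^3 = 0$ together with $(F_n(Q'), G_n(Q'), H_n(Q')) = \lambda \cdot (x,y,z)$ for some scalar $\lambda \in A^+$ depending on the chosen affine representative of $Q'$. The formal-group identity should force $\lambda$ to carry a factor of $p^n$ --- i.e., $\lambda = p^n \mu$ with $\mu \in A^+$ --- after choosing the representative of $Q'$ compatibly with the canonical $p^n$-division on $\widehat{E}_O$. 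The first ingredient applied with $(x',y',z')$ in the role of $(x,y,z)$ then gives $p^n \mu z = H_n(Q') \in (x',y') A^+$; multiplying through by $z$, using $z^3 \in (x,y)A$, and invoking the symmetric containment $(x',y') A^+ \subseteq (p^n, x, y) A^+$ --- expected by parallel analysis of $F_n$ and $G_n$ --- yields $z^2 \in (p^n, x, y) A^+$ after handling the unit part of $\mu$.

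The main obstacle I anticipate is the central step: rigorously placing the factor $p^n$ into $\lambda$. The formal-group identity lives at the \emph{non-vertex} closed point $O$ of $\Spec A$, whereas the ideal $(p^n, x, y)$ is anchored at the vertex $(x,y,z)$. Reconciling these requires carefully matching the projective-scaling ambiguity of the division point $Q'$ against the integral structure on $A^+$. A natural route is to argue symmetrically using both $[p^n]$ and its dual isogeny $\widehat{[p^n]}$ --- whose composition is $[p^{2n}]$, acting as multiplication by $p^{2n}$ on the entire cotangent bundle of $E$ --- so that the $p^n$-scaling extracted on each side cancels the projective-scaling ambiguity.
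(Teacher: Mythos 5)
Your geometric setup coincides with the paper's: the multiplication-by-$p^n$ isogeny, the induced module-finite graded ring endomorphism $m_{p^n}$ of the cone $S=\mathbb{Z}_p[x,y,z]/(x^3+y^3+z^3)$ with $m_{p^n}(x)=F_n$, etc., and the reinterpretation of that endomorphism as a finite extension $S\subseteq T\subseteq S^+$ (your division point $Q'=(x',y',z')$ is exactly a generating set for such a $T$). But the central step is missing, as you yourself flag, and neither mechanism you propose for it can work. The scalar $\lambda$ is pure projective-rescaling ambiguity: replacing $Q'$ by $cQ'$ multiplies $\lambda$ by $c^{p^{2n}}$, and since the fraction field of $S^+$ is algebraically closed one may normalize $\lambda=1$ while keeping $x',y',z'$ integral over $S$ (they are then images of $x,y,z$ under a ring map extending $F_n\mapsto x$, $G_n\mapsto y$, $H_n\mapsto z$, and $S$ is integral over $\mathbb{Z}_p[F_n,G_n,H_n]$); so $\lambda$ cannot be forced to carry a factor of $p^n$. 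Worse, with $\lambda=1$ your first ingredient gives $z=H_n(Q')\in(x',y')S^+$, so your ``symmetric containment'' $(x',y')S^+\subseteq(p^n,x,y)S^+$ would yield $z\in(p^n,x,y)R^+$ for all $n$; for $p\neq 3$, reducing modulo a prime of $R^+$ lying over $pR$ would place $z$ in the plus closure, hence the tight closure, of $(x,y)$ in $\mathbb{F}_p[[x,y,z]]/(x^3+y^3+z^3)$, which is $(x,y,z^2)$ and does not contain $z$. So that containment is false. Finally, the invariant differential records the action of $[p^n]^*$ on $H^0(E,\Omega^1_E)$, i.e.\ on the cotangent space at $O$, which is not where the obstruction lives: the obstruction to $z^2\in(x,y)$ is the class $[z^2/(xy)]$ spanning $H^1(E,\mathcal{O}_E)\cong(H^2_{(x,y,z)}(S))_0$.

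The paper's proof supplies exactly this missing step. Since $m_{p^n}$ induces multiplication by $p^n$ on $(H^2_{(x,y,z)}(S))_0\cong H^1(E,\mathcal{O}_E)$ and is computed by the same \v{C}ech complex, one has $[\,m_{p^n}(z^2)/(F_nG_n)\,]=p^n[\alpha/(F_nG_n)]$ in local cohomology, and unwinding the \v{C}ech complex produces the honest polynomial identity $H_n^2=p^n\alpha+\beta_1F_n+\beta_2G_n$ in $S$, i.e.\ $m_{p^n}(z^2)\in(p^n,m_{p^n}(x),m_{p^n}(y))S$. Evaluating this identity at your normalized division point $Q'$ (equivalently, transporting it along the isomorphism $\mathbb{Z}_p[F_n,G_n,H_n]\cong S$) gives $z^2\in(p^n,x,y)T\subseteq(p^n,x,y)R^+$. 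If you wish to keep the formal-group viewpoint, you would have to pass through Serre duality to convert the statement about $\Omega^1_E$ into one about $H^1(E,\mathcal{O}_E)$ and then still unwind the latter into an ideal membership; the \v{C}ech-complex route does this directly.
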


\begin{proof}
Let $S = \mathbb{Z}_{p}[x, y, z]/(x^{3} + y^{3} + z^{3})$. Note that $S$ is the homogeneous coordinate ring of an elliptic curve $Proj(S) = E$ over $\mathbb{Z}_{p}$. In general morphisms of projective varieties do not correspond to morphisms of the corresponding graded rings, however for abelian varieties, the $p^{n}$ power maps $[p^{n}]$ lift to module-finite graded \emph{ring} endomorphisms $m_{p^{n}}$ of the homogenous coordinate ring $S$, see discussion after \cite[(3.4.1)]{RSS07}, paragraph starting with ``Thus we have a map of graded rings from $R$ to itself...". The maps $[p^{n}]:E \rightarrow E$ induce multiplication by $p^{n}$ maps on $H^{1}(E, \mathcal{O}_{E}) \simeq \mathbb{Z}_{p}$, this is a standard fact about abelian schemes and we refer the reader to \cite[Example 4.14]{BMP+20} for a proof. Since the degree zero part of $H^{2}_{(x,y,z)}(S)$ is isomorphic to $H^{1}(E, \mathcal{O}_{E})$ and by construction of $m_{p^{n}}$ is defined by the same \v{C}ech complex, $m_{p^{n}}: (H^{2}_{(x,y,z)}(S))_{0} \rightarrow (H^{2}_{(x,y,z)}(S))_{0}$ is also the multiplication by $p^{n}$ map.
\par
Note that $\sqrt{(x,y)} = \sqrt{(m_{p^{n}}(x), m_{p^{n}}(y))} = (x,y,z)$ is the irrelevant ideal. The class $[\frac{z^{2}}{xy}]$ spans $(H^{2}_{(x,y,z)}(S))_{0}$ and let $[\frac{\alpha}{m_{p^{n}}(x)m_{p^{n}}(y)}]$ be another generator of $(H^{2}_{(x,y,z)}(S))_{0}$ obtained by computing the \v{C}ech complex with respect to $(m_{p^{n}}(x), m_{p^{n}}(y))$. Hence we have $[\frac{m_{p^{n}}(z^{2})}{m_{p^{n}}(x)m_{p^{n}}(y)}] = [p^{n}\frac{\alpha}{m_{p^{n}}(x)m_{p^{n}}(y)}]$ as classes in $(H^{2}_{(x,y,z)}(S))_{0}$. Unwinding the definition of $H^{2}_{(x,y,z)}(S)$ using the \v{C}ech complex we have $\frac{m_{p^{n}}(z^{2})}{m_{p^{n}}(x)m_{p^{n}}(y)} = p^{n}\frac{\alpha}{m_{p^{n}}(x)m_{p^{n}}(y)} + \frac{\beta_{1}}{m_{p^{n}}(y)} + \frac{\beta_{2}}{m_{p^{n}}(x)}$ for some $\beta_{1}, \beta_{2} \in S$ and cross multiplying by $m_{p^{n}}(x)m_{p^{n}}(y)$ we have $m_{p^{n}}(z^{2})\in(p^{n}, m_{p^{n}}(x) , m_{p^{n}}(y))$ for all $n$. Since $m_{p^{n}}: S \rightarrow S$ is a module finite morphism, we have a copy of $S$, $T$ in $S^{+}$ such that the inclusion $S \rightarrow T$ is the same as $m_{p^{n}}: S \rightarrow S$. Hence $z^{2}\in(p^{n}, x, y)S^{+}$. Since $S$ is contained in $R$ we have $S^{+} \subset R^{+}$ and hence $z^{2}\in(p^{n}, x, y)R^{+}$.  
\end{proof}

The anonymous referee kindly pointed out to us that this proposition also follows from the fact that since $p$ is a non-zero divisor on $R^{+}/(x,y)$ by \cite{Bha20} the derived $p$ adic completion of $R^{+}/(x,y)$ coincides with itself and hence is in particular $p$-adically separated.
\\
\begin{proposition} \label{imp}
Let $(R,m)$ be an excellent local domain of mixed characteristic $(0,p)$ and let $(p, x, y)$ denote a system of parameters. Then $\cap_{n}(p^{n}, x, y)R^{+} \subset (x, y)\widehat{R^{+}}$. 
\end{proposition}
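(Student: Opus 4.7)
The key input is Bhatt's theorem \cite{Bha20} in the form already used in Proposition \ref{powermap}: $p$ is a non-zero divisor on $R^{+}/(x, y) R^{+}$. The plan is, starting from any $r \in \bigcap_{n} (p^{n}, x, y) R^{+}$, to produce $p$-adically Cauchy sequences $\{b_{n}\}, \{c_{n}\} \subset R^{+}$ satisfying $r - x b_{n} - y c_{n} \in p^{n} R^{+}$, and then to take limits in $\widehat{R^{+}}$ to write $r$ as an element of $(x,y)\widehat{R^{+}}$.

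For the base case, pick any $a_{1}, b_{1}, c_{1} \in R^{+}$ with $r = p a_{1} + x b_{1} + y c_{1}$. For the inductive step, assume $r = p^{n} a_{n} + x b_{n} + y c_{n}$; by hypothesis one may also write $r = p^{n+1} a' + x b' + y c'$ for some $a', b', c' \in R^{+}$. Subtracting and working modulo $(x, y) R^{+}$ gives $p^{n}(a_{n} - p a') \equiv 0$, and the non-zero-divisor property of $p$ (and hence of $p^{n}$) on $R^{+}/(x, y) R^{+}$ yields $a_{n} = p a' + x e + y f$ for some $e, f \in R^{+}$. Substituting back produces $r = p^{n+1} a' + x(b_{n} + p^{n} e) + y(c_{n} + p^{n} f)$, so the assignments $a_{n+1} := a'$, $b_{n+1} := b_{n} + p^{n} e$, $c_{n+1} := c_{n} + p^{n} f$ preserve the required form and guarantee $b_{n+1} \equiv b_{n}$ and $c_{n+1} \equiv c_{n}$ modulo $p^{n}$.

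The sequences $\{b_{n}\}, \{c_{n}\}$ are therefore $p$-adically Cauchy and converge to some $b, c \in \widehat{R^{+}}$. Passing to the limit in the identity $r - x b_{n} - y c_{n} = p^{n} a_{n}$ inside $\widehat{R^{+}}$, the right-hand side lies in $p^{n} \widehat{R^{+}}$ and tends to $0$ in the $p$-adic topology, so $r = x b + y c \in (x, y) \widehat{R^{+}}$. The only substantive ingredient is the single invocation of Bhatt's theorem; the rest is a standard successive-approximation construction, and the main thing to watch is that we never need $\{a_{n}\}$ itself to converge (indeed it need not, and we only use that $p^{n} a_{n} \to 0$ trivially in $\widehat{R^{+}}$).

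I would additionally remark that combining the argument above with the referee's observation recorded after Proposition \ref{powermap} that $R^{+}/(x, y) R^{+}$ is in fact $p$-adically separated, the very same strategy delivers the strictly stronger conclusion $\bigcap_{n} (p^{n}, x, y) R^{+} \subseteq (x, y) R^{+}$, since the hypothesis forces the image of $r$ in $R^{+}/(x, y) R^{+}$ to lie in $\bigcap_{n} p^{n} (R^{+}/(x, y) R^{+}) = 0$.
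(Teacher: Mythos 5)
Your main argument is correct and is essentially the paper's proof in a slightly cleaner packaging: both are successive-approximation arguments that use Bhatt's regularity results to improve a representation of $r$ modulo $(p^{n},x,y)$ to one modulo $(p^{n+1},x,y)$ while changing the $x$- and $y$-coefficients only by multiples of $p^{n}$. The paper invokes the regularity of the sequence in the order $(p,x,y)$ (first killing $y$ modulo $(p,x)$, then $x$ modulo $p$) to arrange the corrections to be divisible by $p^{i}$, whereas you work directly modulo $(x,y)R^{+}$ and use that $p^{n}$ is a non-zero-divisor on $R^{+}/(x,y)R^{+}$. Since regular sequences need not be permutable over non-noetherian rings, you should quote that fact in exactly that form (it is the form recorded in the remark following Proposition \ref{powermap}, attributed to \cite{Bha20}) rather than deducing it from regularity of $(p,x,y)$. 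Your closing limit step also silently uses that $\widehat{R^{+}}$ is $p$-adically separated; this holds because $R^{+}$ is $p$-torsion-free, and the paper's own proof is no more explicit on this point.

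Your final remark, however, is false, and you can see this from the surrounding propositions of the paper itself: for $R=\mathbb{Z}_{p}[[x,y,z]]/(x^{3}+y^{3}+z^{3})$ one has $z^{2}\in\bigcap_{n}(p^{n},x,y)R^{+}$ (Proposition \ref{powermap}) while $z^{2}\notin(x,y)R^{+}$ (the first proposition of Section 5, since $R[1/p]$ is a characteristic-zero splinter). Hence $\bigcap_{n}(p^{n},x,y)R^{+}\not\subseteq(x,y)R^{+}$, and equivalently $R^{+}/(x,y)R^{+}$ is \emph{not} $p$-adically separated. The error is in your reading of the referee's observation: for a $p$-torsion-free module the derived $p$-adic completion agrees with the classical one, so the separated object is the \emph{completion} of $R^{+}/(x,y)R^{+}$, namely $\widehat{R^{+}}/(x,y)\widehat{R^{+}}$, not $R^{+}/(x,y)R^{+}$ itself. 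Indeed, if $R^{+}/(x,y)R^{+}$ were $p$-adically separated, Proposition \ref{imp} would collapse to the statement that plus closure already captures everything, and the entire point of Section 5 --- the gap between $IR^{+}\cap R$ and $I\widehat{R^{+}}\cap R$ --- would disappear.
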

\begin{proof}
We will be using the main theorems of \cite{Bha20}, that is by \cite[Corollary 5.11]{Bha20} we know that $(p^{n}, x, y)$ is a regular sequence on $R^{+}$ for every $n$. The reader is encouraged to look at \cite[Lemma 3.2]{HM21} where this proposition is proved for a two element ideal $(p,g)$ and does not use \cite{Bha20}, since Cohen-Macaulayness of $R^{+}$ in dimension $2$ follows from elementary commutative algebra. Let $\alpha \in \cap_{n}(p^{n}, x, y)R^{+}$. We have $\alpha = a_{1}x + b_{1}y + c_{1}p$ for some $(a_{1}, b_{1}, c_{1}) \in R^{+}$ and we also have that $c_{1}p \in \cap_{n}(p^{n}, x, y)$. So $c_{1}p = a_{2}x + b_{2}y + c_{2}p^{2}$ for some $(a_{2}, b_{2}, c_{2}) \in R^{+}$. We will now use that $(p, x, y)$ is a regular sequence to show that we may choose $(a_{2}, b_{2})$ to be divisible by $p$. Since $c_{1}p = a_{2}x + b_{2}y + c_{2}p^{2}$, we have that $(a_{2}x + b_{2}y) = 0$ as an element in $R^{+}/p$. Hence $b_{2}y = 0$ as an element of $R^{+}/(p, x)$ and as $(p, x, y)$ form a regular sequence on $R^{+}$ we have $b_{2} = gx + hp$ for some $(g,h) \in R^{+}$ and hence $c_{1}p = (a_{2}+gy)x + hpy + c_{2}p^{2}$. Since $x$ is a non-zero divisor on $R^{+}/p$ we have $(a_{2} + gy) = jp$ for some $j \in R^{+}$ and hence $\alpha = a_{1}x + b_{1}y + (a_{2}+gy)x + hpy + c_{2}p^{2}$. Thus we may choose $a_{2}$ and $b_{2}$ to be divisible by $p$. We repeat this process to get $(a_{i}, b_{i})$: we have $c_{2}p^{2} \in \cap_{n}(p^{n}, x, y)$ and hence we may write $c_{2}p^{2} = a_{3}x + b_{3}y + c_{3}p^{3}$ and by the same argument with $p$ replaced by $p^{2}$ we may assume $a_{3}, b_{3}$ to be divisible by $p^{2}$. \\
Continuing this way we have that $(a_{i}, b_{i}) \in p^{i-1}R^{+}$ and hence the sums $(a_{1}+a_{2}+...), (b_{1}+b_{2}+...)$ converge and are well defined elements of $\widehat{R^{+}}$. Finally, we have that $\alpha = a_{1}x + b_{1}y + a_{2}x + b_{2}y +... = x(a_{1}+a_{2}+...) + y(b_{1}+b_{2}+...) \in (x,y)\widehat{R^{+}}$.  
\end{proof}

\begin{corollary}
Let $R = \mathbb{Z}_{p}[[x, y, z]]/(x^{3} + y^{3} + z^{3})$ and $I = (x,y)$. Then $z^{2}$ is not in $I^{+} = IR^{+}\cap R$ but is in the dagger closure of $I$, $I^{\dagger}$, and the ``$\widehat{R^{+}}$-closure of $I$'', $I\widehat{R^{+}}\cap R$.
\end{corollary}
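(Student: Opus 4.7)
The plan is to assemble the three assertions directly from the propositions already established in this section, since each has essentially been proved in isolation. First, the non-containment $z^2 \notin I^+$ is exactly the content of the first proposition of this section, which used that $R[\tfrac{1}{p}]$ is a normal equicharacteristic zero domain (hence a splinter) to reduce the containment to the $(x,y)$-adic completion $\mathbb{Q}_p[[x,y,z]]/(x^3+y^3+z^3)$, where it clearly fails. Second, $z^2 \in I^\dagger$ is the content of the next proposition, which adapts Theorem \ref{calc} to the mixed characteristic polynomial ring $\mathbb{Z}_p[x,y,z]/(x^3+y^3+z^3)$, whose absolute integral closure sits inside $R^+$.

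The only remaining claim is $z^2 \in I\widehat{R^+} \cap R$, and my plan for this is to chain Proposition \ref{powermap} with Proposition \ref{imp}. Proposition \ref{powermap} produces, for every $n \geq 1$, a relation $z^2 \in (p^n, x, y)R^+$, using that the multiplication-by-$p^n$ map on the elliptic curve $\mathrm{Proj}(S)$ lifts to a module-finite graded ring endomorphism of $S = \mathbb{Z}_p[x,y,z]/(x^3+y^3+z^3)$. Consequently
\[
z^2 \in \bigcap_{n \geq 1}(p^n, x, y)R^+.
\]
To apply Proposition \ref{imp} I then need to check its two hypotheses on $R$: excellence and mixed characteristic $(0,p)$ are immediate from the definition, and $(p,x,y)$ is a system of parameters because $R/(p,x,y)\simeq \mathbb{F}_p[[z]]/(z^3)$ is artinian while $\dim(R)=3$. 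Proposition \ref{imp} then gives $\bigcap_n (p^n, x, y)R^+ \subset (x,y)\widehat{R^+}$, and since $z^2 \in R$ we obtain $z^2 \in I\widehat{R^+} \cap R$ as desired.

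I do not anticipate any real obstacle: every nontrivial input has already been built in the previous four propositions, so this corollary is essentially a one-paragraph aggregation of their conclusions. The only item requiring even a line of verification is the system-of-parameters check used to invoke Proposition \ref{imp}; everything else is literal quotation.
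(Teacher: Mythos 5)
Your proposal is correct and is exactly the paper's argument: the corollary is proved there simply as ``Follows from the propositions above,'' i.e.\ the first two propositions give $z^{2}\notin I^{+}$ and $z^{2}\in I^{\dagger}$, and chaining Proposition \ref{powermap} with Proposition \ref{imp} gives $z^{2}\in I\widehat{R^{+}}\cap R$. Your extra check that $(p,x,y)$ is a system of parameters is a harmless (and welcome) addition.
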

\begin{proof}
Follows from the propositions above.  
\end{proof}

\begin{remark}
Note that valuations on $R$ (positive on $I$) extend to those on $\widehat{R^{+}}$ by \cite[Lemma 1.3]{Hei21}. Given this, one can quickly conclude that a negative answer to Question \ref{smiana} for any finitely generated ideal $I$ of $R^{+}$ (not just one generated by system of parameters) implies that the answer to Question \ref{que1} is negative, that is $\widehat{R^{+}}$ is not coherent (as any element of $I\widehat{R^{+}}$ will have valuation at least the minimum of valuations of generators of $I$). This is analogous to the fact that if absolute integral closures were coherent in positive characteristic then tight closure would be equal to plus closure and hence would commute with localization. Brenner and Monsky's paper \cite{BM10} which settled the long standing question of whether tight closure localizes (in the negative) immediately implies that $R^{+}$ is not coherent in positive characteristic for their choice of $R$ (see \cite[Proposition 4.5]{Shi10} and \cite[Proposition 2.3]{AH97}).
\end{remark}

\section{Acknowledgements}
The author thanks Mohsen Asgharzadeh, Bhargav Bhatt, Rankeya Datta, Arnab Kundu, Vaibhav Pandey, Kazuma Shimomoto, and his advisor Kevin Tucker for conversations, mentorship, encouragement, useful suggestions, or a reading of the manuscript, and especially Kazuma Shimomoto for the permission to include Question \ref{que2} and his proof of Theorem \ref{asgthm}. The anonymous referee suggested improvements in exposition and pointed out mistakes and serious gaps in a former version of this article (especially in what is now Section $5$) which we are grateful for.

\end{document}